% !TeX spellcheck = en_US
%\documentclass[twocolumn]{autart}    % Enable this line and disable the
\documentclass[9pt,twocolumn,journal]{IEEEtran}
%\IEEEoverridecommandlockouts                              % This command is only
%%                                                          % needed if you want to
%%                                                          % use the \thanks command
%\overrideIEEEmargins

%\usepackage[utf8]{inputenc}
\usepackage{amsmath, amsfonts, amsthm, amssymb, graphicx, gensymb, textcomp,enumerate}
\newcommand{\ida}{\id \left(a(s)^T x\right)^2 ds }
\renewcommand{\ll}{\left(\!\!}
\newcommand{\rr}{\!\!\right)}
\newcommand{\m}{\mathbf M}
\newcommand{\Ri}{\mathcal R_i}
\newcommand{\Rb}{\mathcal R_b}
\newcommand{\Rm}{\mathcal R_m}

\newcommand{\Rmb}{R_{m,b}}

\renewcommand{\o}{\omega}
\newcommand{\ho}{\hat \o}
\newcommand{\om}{\o_{\max}}
\renewcommand{\to}{\tilde \o}
\newcommand{\cro}[1]{[#1_{\times}]}
\renewcommand{\a}{a}

\newcommand{\ao}{\mathring{\a}}

\newcommand{\at}{\tilde \a}

\newcommand{\R}[1]{\mathbf R ^{#1}}

\newcommand{\la}{\left \{}
\newcommand{\ra}{\right \}}
\newcommand{\id}{\int_t^{t+T}}

\newcommand{\itT}{[t,t+T]}
\renewcommand{\c}{\textnormal{c}}
\newcommand{\cc}{\c\,}
\newcommand{\s}{\textnormal{s}}
\renewcommand{\ss}{\s\,}
%------------------------------------------------------------
% Theorem like environments
%
\newtheorem{Problem}{Problem}

\newtheorem{assump}{Assumption}
\newtheorem{proposition}{Proposition}
\newtheorem{lemma}{Lemma}
\newtheorem{theorem}{Theorem}
\newtheorem{remark}{Remark}
%\newtheorem{solution}{Solution}
%\newtheorem{summary}{Summary}
%\newtheorem{example}{Example}
%\newtheorem{algorithm}{Algorithm}
%\numberwithin{equation}{section}
%--------------------------------------------------------

\begin{document}

%\begin{frontmatter}

\title{Angular velocity nonlinear observer from single vector measurements}

\author{Lionel Magnis,\thanks{MINES ParisTech, PSL Research University, CAS, 60 bd Saint-Michel, 75272 Paris Cedex FRANCE.  E-mail: lionel.magnis@mines-paristech.fr .}~
Nicolas Petit\thanks{MINES ParisTech, PSL Research University, CAS, 60 bd Saint-Michel, 75272 Paris Cedex FRANCE.  E-mail: nicolas.petit@mines-paristech.fr . Phone: +33140519330}}

\maketitle

\begin{IEEEkeywords}
Sensor and data fusion; nonlinear observer and filter design; time-varying systems; guidance navigation and control.               % chosen from the IFAC
\end{IEEEkeywords}                             % keyword list or with the

%%%%%%%%%%%%%%%%%%%%%%%%%%%%%%%%%%%%%%%%%%%%%%%%%%%%%%%%%%%%%%%%%%%%%%%%%%%%%%%%
\begin{abstract}
The paper proposes a technique to estimate the angular velocity of a rigid body from single vector measurements. Compared to the approaches presented in the literature, it does not use attitude information nor rate gyros as inputs. Instead, vector measurements are directly filtered through a nonlinear observer estimating the angular velocity. Convergence is established using a detailed analysis of a linear-time varying dynamics appearing in the estimation error equation. This equation stems from the classic Euler equations and measurement equations. As is proven, the case of free-rotation allows one to relax the persistence of excitation assumption. Simulation results are provided to illustrate the method.
\end{abstract}
%\end{frontmatter}

%%%%%%%%%%%%%%%%%%%%%%%%%%%%%%%%%%%%%%%%%%%%%%%%%%%%%%%%%%%%%%%%%%%%%%%%%%%%%%%%
\section{Introduction}
This article considers the question of estimating the angular velocity of a rigid body from signals from embedded sensors. This general question is of particular importance in various fields of engineering, and in particular for the problem of orientation control, as shown in numerous applications~\cite{salcudean1991,boskovic2000,silani2003,lovera2005} for spacecraft, unmanned aerial vehicles, guided ammunitions, to name a few.

In the literature, two types of methods have been proposed to address this question. First, one can directly measure the angular velocity by using a specific sensor. This straightforward solution requires a strap-down rate gyro~\cite{Titterton2004}. However, rate gyros being relatively fragile and expensive components, prone to drift, this solution is often discarded. The alternative is a \emph{two-step} approach. In the first step, attitude is determined from measurements of known reference vectors. Then, in the second step, attitude variations are used to estimate the angular velocity.

The first step is detailed in~\cite{crassidis2007}. In a nutshell, when two independent vectors are measured with vector sensors attached to a rigid body, the attitude of the rigid body can be found under the form  the solution of the Wahba problem~\cite{wahba1965} which is a minimization problem having as unknown the rotation matrix from a fixed frame to the body frame. Thus, at any instant, full attitude information can be obtained~\cite{shuster1978,shuster1990,baritzhack1996,choukroun2003}. In principles, this is sufficient to perform the second step: once the attitude is known, angular velocity can be estimated from a time-differentiation. However, noises disturb this process. To address this issue, introducing \emph{a priori} information in the estimation process allows one to filter-out noises from the estimates. Following this approach, numerous observers based on the Euler equations have been proposed to estimate angular velocity from full attitude information \cite{salcudean1991,thienel2007,sunde2005,jorgensen2011}.

Besides this two-step approach, which requires measurements of two independent reference vectors, a more direct and less requiring solution can be proposed. In this paper, we expose an algorithm that directly uses the measurements of a \emph{single} vector and reconstructs the angular velocity in a simple manner, by means of a nonlinear observer. This is the contribution of this article. In a related philosophy, we have recently proposed an observer using the measurements from two linearly independent vectors as input~\cite{magnis2014a}. The present paper studies a similarly structured observer. However, due to the fact that here only a single vector measurement is employed, the arguments of proof are completely different, and result in a new and independent contribution.

The paper is organized as follows. In Section~\ref{sec : notations}, we introduce the notations and the problem statement. We analyze the attitude dynamics (rotation and Euler equations) and relate it to the measurements. In Section~\ref{sec : observer}, we define the proposed nonlinear observer. The observer has an extended state and uses output injection. To prove its convergence, the error equation is identified as a linear time-varying (LTV) system perturbed by a linear-quadratic term. Under a persistent excitation (PE) assumption, the LTV dynamics is shown to generate an exponentially convergent dynamics. This property, together with assumptions on the inertia parameters of the rigid body, reveal instrumental to conclude on the exponential uniform convergence of the error dynamics. Importantly, the PE assumption is proven to be automatically satisfied in the particular case of free-rotation. In details, in Section~\ref{sec:PE}, we establish that for almost all initial conditions, the PE assumption holds. This result stems from a detailed analysis of the various types of solutions to the free-rotation dynamics. Illustrative simulation results are given in Section~\ref{sec : simu}. Conclusions and perspectives are given in Section~\ref{sec : conclusion}.
\section{Notations and problem statement}
\label{sec : notations}
\subsection{Notations}
\textbf{Vectors in $\R{3}$} are written with small letters $x$. $|x|$ is the Euclidean norm of $x$. $\cro{x}$ is the skew-symmetric cross-product matrix associated with $x$, i.e. ${\forall y \in \R{3}, \ \cro{x}y = x \times y}$. Namely,
\begin{displaymath}
\cro{x} \triangleq \left(
\begin{array}{ccc}
 0   & -x_3 &  x_2 \\
 x_3 &  0   & -x_1 \\
-x_2 &  x_1 &  0
\end{array}
\right)
\end{displaymath}
where $x_1,x_2,x_3$ are the coordinates of $x$ in the standard basis of $\R{3}$. If $x$ is a unit vector, we have
\begin{displaymath}
\cro{x}^2 = x x^T - I
\end{displaymath}

\textbf{Vectors in $\R{6}$} are written with capital letters $X$. $|X|$ is the Euclidean norm of $X$. The induced norm on $6\times6$ matrices is noted $||\cdot||$. Namely,
\begin{displaymath}
||M|| = \max_{|X|=1} |MX|
\end{displaymath}
For convenience, we may write $X$ under the form
\begin{displaymath}
X = \left(X_1^T,X_2^T\right)^T
\end{displaymath}
with $X_1,X_2 \in \R{3}$. Note that $${|X|^2 = |X_1|^2+|X_2|^2}$$

\textbf{Frames} considered in the following are orthonormal bases of $\R{3}$.

\textbf{Rotation matrix.} For any unit vector ${u \in \R{3}}$ and any ${\zeta \in \mathbf{R}}$, $r_u(\zeta)$ designates the rotation matrix of axis $u$ and angle $\zeta$. Namely
\begin{displaymath}
r_u(\zeta) \triangleq \cos \zeta I + \sin \zeta \cro{u} + (1-\cos \zeta)u u^T
\end{displaymath}

\subsection{Problem statement}
Consider a rigid body rotating with respect to an inertial frame $\Ri$. Note $R$ the rotation matrix from $\Ri$ to a body frame $\Rb$ attached to the rigid body and $\o$ the corresponding angular velocity vector, expressed in $\Rb$.
Assuming that the body rotates under the influence of an external torque $\tau$ (which, is null in the case of free-rotation), the variables $R$ and $\o$ are governed by the following differential equations
\begin{align}
\dot R & = R \cro{\o} \label{eq R}\\
\dot \o & = J^{-1}\left(J\o \times \o + \tau \right) \triangleq E(\o) + J^{-1}\tau \label{eq euler}
\end{align}
where $J = \textrm{diag}(J_1,J_2,J_3)$ is the inertia matrix\footnote{Without restriction, we consider that the axes of $\Rb$ are aligned with the principal axes of inertia of the rigid body.}. Equation~\eqref{eq euler} is known as the set of Euler equations for a rotating rigid body \cite{landau1982}. The torque $\tau$ may result from control inputs or disturbances\footnote{In the case of a satellite e.g., the torque could be generated by inertia wheels, magnetorquers, gravity gradient, among other possibilities.}. We assume that $J$ and $\tau$ are known.

We assume that a constant reference unit vector $\ao$ expressed in $\Ri$ is known, and that sensors arranged on the rigid body allow to measure the corresponding unit vector expressed in $\Rb$. Namely, the measurements are
\begin{equation}
\label{def : a,b}
\a(t) \triangleq R(t)^T \ao
\end{equation}
For implementation, the sensors could be e.g. accelerometers, magnetometers, or Sun sensors to name a few \cite{magnis2014}. We now formulate some assumptions.
\begin{assump}
\label{hypothese O borné}
$\o$ is bounded : $|\o(t)| \leq \om$ at all times
\end{assump}
\begin{assump}[persistent excitation]
\label{hyp:PE}
There exist constant parameters $T >0$ and $0 < \mu < 1$ such that $a(\cdot)$ satisfies
\begin{equation}
\label{eq:pe}
\frac{1}{T}\id \cro{a(\tau)}^T \cro{a(\tau)} d\tau \geq \mu I, \quad \forall t
\end{equation}
\end{assump}
The problem we address in this paper is the following.
\begin{Problem}
\label{Problem}
Under Assumptions~\ref{hypothese O borné}-\ref{hyp:PE}, find an estimate $\ho$ of $\o$ from the measurements $\a$ defined in \eqref{def : a,b}.
\end{Problem}
\begin{remark}[on the persistent excitation]
\label{rk:PE}
\eqref{eq:pe} is equivalent to
\begin{equation}
\label{eq:pe equivalent}
\frac{1}{T} \id \left(x^Ta(\tau)\right)^2 d\tau \leq 1-\mu, \quad \forall t, \quad \forall |x| = 1
\end{equation}
which is only possible if $a(\cdot)$ varies uniformly on every interval $[t,t+T]$. Without the PE assumption, Problem~\ref{Problem} may not have a solution. For example, the initial conditions
\begin{displaymath}
a(t_0) = \left(\begin{array}{c}
1 \\
0 \\
0
\end{array}\right), \quad
\o(t_0) = \left(\begin{array}{c}
w \\
0 \\
0
\end{array}\right)
\end{displaymath}
yield $a(t) = a(t_0)$ for all $t$, regardless of the value of $w$. Hence, the system is clearly not observable. Such a case is discarded by the PE assumption. Note that this assumption bears on the trajectory, hence on the initial condition $X(t_0)$ and on the torque $\tau$ only.
\end{remark}

\section{Observer definition and analysis of convergence}
\label{sec : observer}
\subsection{Observer definition}
The time derivative of the measurement $\a$ is
\begin{displaymath}
\label{dot ab}
\dot \a = \dot R^T \ao = - \cro{\o} R^T \ao = \a \times \o
\end{displaymath}
To solve Problem~\ref{Problem}, the main idea of the paper is to consider the reconstruction of the extended 6-dimensional state $X$ by its estimate $\hat X$
\begin{displaymath}
X = \left(
\begin{array}{c}
\a \\
\o
\end{array}\right), \quad
\hat X = \left(
\begin{array}{c}
\hat a \\
\ho
\end{array}\right)
\end{displaymath}
The state is governed by
\begin{displaymath}
%\label{eq : X}
\dot X = \left(
\begin{array}{c}
\a \times \o\\
E(\o) + J^{-1}\tau
\end{array}
\right)
\end{displaymath}
and the following observer is proposed
\begin{equation}
\label{def : observer}
  \dot {\hat X}  =   \left(
\begin{array}{c}
\a \times \ho - k(\hat \a - \a)\\
E(\ho) + J^{-1}\tau +  k^2  \a  \times  (\hat \a - \a)
\end{array}
\right)
\end{equation}
where $k>0$ is a constant (tuning) parameter. Note
\begin{equation}
\label{def erreur}
\tilde X \triangleq X - \hat X \triangleq \left(
\begin{array}{c}
\at \\
\to
\end{array}\right)
\end{equation}
the error state. We have
\begin{equation}
\label{eq : dot X tilde}
\dot{\tilde X} = \left(
\begin{array}{cc}
-k I & \cro{\a} \\
k^2 \cro{\a} & 0
\end{array}
\right)
  \tilde X
+
\left(
\begin{array}{c}
0 \\
E(\o) - E(\ho)
\end{array}
\right)
\end{equation}

\subsection{Preliminary change of variables and properties}
The study of the dynamics \eqref{eq : dot X tilde} employs a preliminary change of coordinates. Note
\begin{equation}
\label{def : Z}
Z \triangleq \left(
\begin{array}{c}
\tilde \a \\
\frac{\to}{k}
\end{array}\right)
\end{equation}
yielding
\begin{displaymath}
\label{systeme Z perturbe}
\dot{Z} = kA(t)Z
+
\ll
\begin{array}{c}
0 \\
\frac{E(\o) - E(\ho)}{k}
\end{array}
\rr
\end{displaymath}
with
\begin{equation}
\label{def : A}
A(t) \triangleq \left(
\begin{array}{ccc}
- I & \cro{\a(t)} \\
\cro{\a(t)} & 0
\end{array}
\right)
\end{equation}
which we will analyze as an ideal linear time-varying (LTV) system
\begin{equation}
\label{systeme Z non perturbe}
\dot Z = k A(t) Z
\end{equation}
perturbed by the input term
\begin{equation}
\label{def : xi}
\xi \triangleq \ll
\begin{array}{c}
0 \\
\frac{E(\o) - E(\ho)}{k}
\end{array}
\rr
\end{equation}
We start by upper-bounding the disturbance \eqref{def : xi}.
%\begin{proposition}[Bound on the unforced LTV system]
%\label{prop : A borné}
%$A(t)$ defined in \eqref{def : A} is upper-bounded by $\sqrt{3}$.
%\end{proposition}
%\begin{pf}
%Let $Y \in \R{6}$ such that $|Y| = 1$. One has
%\begin{align*}
%|A(t)Y|^2 = &  |-Y_1 + \a \times Y_2|^2 + |\a \times Y_1|^2 \\
%	   \leq & 2 \left( |Y_1|^2 + |\a \times Y_2|^2 + |\a \times Y_1|^2 \right)\\
%	   \leq & 3 \left(|Y_1|^2 + |Y_2|^2\right) = \am^2 |Y|^2
%\end{align*}
%Hence, $||A(t)|| \leq \sqrt 3$.
%\end{proof}
\begin{proposition}[Bound on the disturbance]
For any $Z$, $\xi$ is bounded by
\begin{equation}
\label{prop : borne xi}
|\xi| \leq d(\sqrt{2}\om |Z| + k |Z|^2)
\end{equation}
where $d$ is defined as
\begin{equation}
\label{def : d}
d \triangleq \max \la \left|\frac{J_3-J_2}{J_1}\right|, \quad \left|\frac{J_1-J_3}{J_2}\right|, \quad \left|\frac{J_2-J_1}{J_3}\right| \ra
\end{equation}
\end{proposition}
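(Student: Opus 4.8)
The plan is to observe that the first block of $\xi$ in \eqref{def : xi} vanishes, so that $|\xi| = |E(\o) - E(\ho)|/k$, and then to bound $|E(\o)-E(\ho)|$ by a low-degree polynomial in $|\o|$ and $|\to|$. The conversion to a bound in $|Z|$ is immediate afterwards: from \eqref{def : Z} one has $|\to| \le k|Z|$, and Assumption~\ref{hypothese O borné} gives $|\o| \le \om$.

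First I would write $E$ in coordinates. With $J = \textrm{diag}(J_1,J_2,J_3)$, a direct computation of $J\o\times\o$ shows $E(\o) = \big(c_1\o_2\o_3,\ c_2\o_3\o_1,\ c_3\o_1\o_2\big)^T$ with $c_1 = (J_2-J_3)/J_1$, $c_2 = (J_3-J_1)/J_2$, $c_3 = (J_1-J_2)/J_3$, so that $|c_i| \le d$ by \eqref{def : d}. Each component of $E$ being a quadratic form, I introduce the associated symmetric bilinear map $B$ with $B(\o,\o) = E(\o)$ (explicitly $B(u,v)_1 = \tfrac{c_1}{2}(u_2v_3+u_3v_2)$ and cyclically) and use polarization: $E(\o)-E(\ho) = B(\o+\ho,\,\o-\ho) = B(\o+\ho,\to)$. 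Substituting $\o+\ho = 2\o-\to$ gives $E(\o)-E(\ho) = 2\,B(\o,\to) - B(\to,\to)$.

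The key estimate — and the only step requiring real work — is the operator bound
\begin{displaymath}
|B(u,v)| \le \frac{d}{\sqrt 2}\,|u|\,|v|, \qquad \forall\, u,v\in\R 3 .
\end{displaymath}
I would prove it by expanding $|B(u,v)|^2 = \tfrac14\sum_i c_i^2\,(\cdot)_i^2 \le \tfrac{d^2}{4}P$, where $P$ is the sum of the three squared symmetric products $(u_2v_3+u_3v_2)^2 + (u_3v_1+u_1v_3)^2 + (u_1v_2+u_2v_1)^2$, and then verifying the identity $P = |u|^2|v|^2 + (u^Tv)^2 - 2\sum_i u_i^2 v_i^2$ term by term. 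Since $-2\sum_i u_i^2v_i^2 \le 0$ and $(u^Tv)^2 \le |u|^2|v|^2$ by Cauchy--Schwarz, this yields $P \le 2|u|^2|v|^2$ and hence the claim. I expect this bookkeeping identity to be the main obstacle; everything else is routine.

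Finally I would assemble the pieces. From the polarization identity and the operator bound, $|E(\o)-E(\ho)| \le 2\cdot\tfrac{d}{\sqrt 2}|\o|\,|\to| + \tfrac{d}{\sqrt 2}|\to|^2 \le \sqrt 2\,d\,\om\,|\to| + \tfrac{d}{\sqrt 2}|\to|^2$, using $|\o|\le\om$. Dividing by $k$ and using $|\to| \le k|Z|$ gives $|\xi| \le \sqrt 2\,d\,\om\,|Z| + \tfrac{d}{\sqrt 2}\,k\,|Z|^2$, and since $1/\sqrt2 < 1$ this is majorized by $d\big(\sqrt 2\,\om\,|Z| + k\,|Z|^2\big)$, i.e. \eqref{prop : borne xi}.
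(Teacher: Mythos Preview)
Your proof is correct and follows essentially the same route as the paper: both split $E(\o)-E(\ho)$ into a part linear in $\to$ and a part quadratic in $\to$ (the paper's $\delta_1,\delta_2$ are exactly your $2B(\o,\to)$ and $B(\to,\to)$) and bound each using the constant $d$ and Cauchy--Schwarz. The only difference is packaging: you prove one operator inequality $|B(u,v)|\le \tfrac{d}{\sqrt 2}|u||v|$ and apply it twice, whereas the paper bounds $\delta_1$ and $\delta_2$ separately by direct coordinate estimates; your version even yields a slightly sharper constant on the quadratic term ($d/\sqrt 2$ versus $d$), which you then relax to match \eqref{prop : borne xi}.
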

\begin{proof}
We have $$|\xi| = \frac{1}{k} |E(\o) - E(\ho)|$$
with, due to the quadratic nature of $E(\cdot)$,
\begin{align*}
& E(\o) - E(\ho) =  J^{-1} \left( J\to \times \o + J\o \times \to - J\to \times \to \right) \\
& =
\left(
\begin{array}{c}
\frac{J_2-J_3}{J_1} (\o_2 \to_3 + \to_2 \o_3) \\
\frac{J_3-J_1}{J_2} (\o_3 \to_1 + \to_3 \o_1) \\
\frac{J_1-J_2}{J_3} (\o_1 \to_2 + \to_1 \o_2) \\
\end{array}
\right)
-
\left(
\begin{array}{c}
\frac{J_2-J_3}{J_1} \to_2 \to_3 \\
\frac{J_3-J_1}{J_2} \to_3 \to_1 \\
\frac{J_1-J_2}{J_3} \to_1 \to_2 \\
\end{array}
\right) \\
& \triangleq \delta_1 - \delta_2
\end{align*}
As a straightforward consequence
\begin{displaymath}
|\delta_2| \leq d|\to|^2
\end{displaymath}
Moreover, by Cauchy-Schwarz inequality
\begin{displaymath}
(\o_2 \to_3 + \to_2 \o_3)^2 \leq (\o_2^2+\o_3^2) (\to_2^3 + \to_3^2) \leq (\o_2^2+\o_3^2) |\to|^2
\end{displaymath}
Using similar inequalities for all the coordinates of $\delta_1$ yields
\begin{displaymath}
|\delta_1|^2 \leq 2 d^2 |\o|^2 |\to|^2 \leq 2 d^2\om^2 |\to|^2
\end{displaymath}
Hence,
\begin{align*}
|\xi| \leq \frac{|\delta_1| + |\delta_2|}{k}&  \leq d \sqrt{2} \om \left|\frac{\to}{k}\right| + k d \left|\frac{\to}{k}\right|^2 \\
	  & \leq d (\sqrt{2} \om |Z| + k  |Z|^2)
\end{align*}
\end{proof}
\begin{remark}[on the quantity $d$]
As $J_1,J_2,J_3$ are the main moments of inertia of the rigid body, we have \cite{landau1982} (\S 32,9) $$J_i \leq J_j + J_k$$ for all permutations $i,j,k$ and hence ${0 \leq d \leq 1}$. Moreover, $d=0$ if and only if ${J_1=J_2=J_3}$. $d$ appears as a measurement of how far the rigid body is from an ideal symmetric body. For this reason, we call it \emph{distordance} of the rigid body. Examples:

\begin{itemize}
\item For a homogeneous parallelepiped of size ${l \times l \times L}$, with ${L \geq l}$, we have $$d = \frac{L^2-l^2}{L^2 + l^2}$$
\item For a homogeneous straight cylinder of radius $r$ and height $h$ we have $$d = \frac{\left|h^2-3r^2\right|}{h^2+3r^2}$$
\end{itemize}
\end{remark}

\subsection{Analysis of the LTV dynamics $\dot Z = k A(t) Z$}
\label{sec : Z'=kAZ}
The shape of $A(t)$ will appear familiar to the reader acquainted with adaptive control problems. Along the trajectories of~\eqref{systeme Z non perturbe} we have
\begin{displaymath}
\frac{d}{dt} |Z|^2 = -2k|Z_1|^2 = -Z^T C^T C Z
\end{displaymath}
with $${C \triangleq (\sqrt {2k} \quad 0)}$$ As will be seen in the proof of the following Theorem, the PE assumption will imply, in turn, that the pair $(kA(\cdot),C)$ is uniformly completely observable (UCO), which guarantees uniform exponential stability of the LTV system.

\begin{theorem}[LTV system exponential stability]
\label{thm Z' = kAZ}
There exists $0<c<1$ depending only on $T, \mu,k$ and $\om$ such that the solution of \eqref{systeme Z non perturbe} satisfies for all integer $N\geq 0$ $$|Z(t)|^2 \leq c^{N} |Z(t_0)|^2, \quad \forall t\in [t_0+NT,t_0+(N+1)T]$$
for any initial condition $t_0, Z(t_0)$.
\end{theorem}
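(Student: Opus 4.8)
The plan is to prove a one-step contraction $|Z(t+T)|^2 \le c\,|Z(t)|^2$ with $c\in(0,1)$ independent of $t$ and of the trajectory, and then to iterate it. Along \eqref{systeme Z non perturbe} one has $\tfrac{d}{dt}|Z|^2 = -2k|Z_1|^2\le 0$, so $|Z(\cdot)|^2$ is non-increasing and, integrating over $[t,t+T]$, $|Z(t+T)|^2 = |Z(t)|^2 - 2k\id|Z_1(s)|^2\,ds$. Hence the Theorem reduces to a lower bound $\id|Z_1(s)|^2\,ds \ge \epsilon_0\,|Z(t)|^2$ with $\epsilon_0>0$ depending only on $T,\mu,k,\om$: one then sets $c := \max\{\tfrac12,\,1-2k\epsilon_0\}\in(0,1)$ (note $2k\epsilon_0\le 1$ since $|Z(t+T)|^2\ge 0$), so $|Z(t+T)|^2\le c\,|Z(t)|^2$, and for $t\in[t_0+NT,t_0+(N+1)T]$ monotonicity together with $N$ iterations give $|Z(t)|^2\le|Z(t_0+NT)|^2\le c^N|Z(t_0)|^2$. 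By linearity of \eqref{systeme Z non perturbe} it suffices to prove the bound for solutions normalized by $|Z(t)|=1$.

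So I would fix $t$ and a solution with $|Z(t)|=1$; monotonicity gives $|Z_1(s)|,|Z_2(s)|\le 1$ for all $s\ge t$. Argue by contradiction, assuming $\epsilon:=\id|Z_1(s)|^2\,ds$ is small. Since $|\dot Z_1| = k|{-}Z_1+\a\times Z_2|\le 2k$, the map $Z_1$ is $2k$-Lipschitz on $[t,t+T]$; combined with the smallness of its $L^2$ norm this forces $|Z_1(t)|$ and $|Z_1(t+T)|$ to be $\le \eta_0(\epsilon):=(16k\epsilon)^{1/3}$ whenever $\epsilon<4k^2T^3$ (if $|Z_1|$ equals $\eta$ at an endpoint it stays $\ge\eta/2$ on an adjacent subinterval of length $\min\{\eta/4k,\,T\}$, so $\epsilon\ge(\eta/2)^2\min\{\eta/4k,\,T\}$). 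Put $\bar Z_2:=Z_2(t)$; then $|\bar Z_2|^2 = 1-|Z_1(t)|^2 \ge 1-\eta_0^2$, and from $\dot Z_2 = k\,\a\times Z_1$ and Cauchy--Schwarz, $|Z_2(s)-\bar Z_2|\le k\sqrt{T\epsilon}$ on $[t,t+T]$.

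The core of the argument is to show that $g(s):=\a(s)\times\bar Z_2$ is small in $L^2[t,t+T]$, which contradicts persistent excitation. On one hand, \eqref{eq:pe} applied to the constant vector $\bar Z_2$ gives $\id|g(s)|^2\,ds \ge T\mu|\bar Z_2|^2 \ge T\mu(1-\eta_0^2)$. On the other hand, $\a\times Z_2 = \tfrac1k\dot Z_1 + Z_1$, so $g = \tfrac1k\dot Z_1 + Z_1 - \a\times(Z_2-\bar Z_2)$; multiplying by $g$ and integrating, the terms $\id\langle g,Z_1\rangle\,ds$ and $\id\langle g,\a\times(Z_2-\bar Z_2)\rangle\,ds$ are $\le\|g\|_{L^2}$ times $\sqrt\epsilon$ and $kT\sqrt\epsilon$ respectively. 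The delicate term $\tfrac1k\id\langle g,\dot Z_1\rangle\,ds$ is handled by integration by parts, $\id\langle g,\dot Z_1\rangle\,ds = \big[\langle g,Z_1\rangle\big]_t^{t+T} - \id\langle\dot g,Z_1\rangle\,ds$: the boundary term is $\le|g|\,\big(|Z_1(t)|+|Z_1(t+T)|\big)\le 2\eta_0$ since $|g|\le|\bar Z_2|\le1$, and $\dot g = \dot\a\times\bar Z_2 = (\a\times\o)\times\bar Z_2$, so Assumption~\ref{hypothese O borné} gives $|\dot g|\le\om$ and $|\id\langle\dot g,Z_1\rangle\,ds|\le\om\sqrt{T\epsilon}$. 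Writing $u:=\|g\|_{L^2}\le\sqrt T$, this yields an inequality of the shape $u^2 \le \tfrac1k\big(2\eta_0(\epsilon)+\om\sqrt{T\epsilon}\big) + u(1+kT)\sqrt\epsilon$, whose right-hand side tends to $0$ as $\epsilon\to0$ (using $u\le\sqrt T$), while the PE estimate forces $u^2\ge T\mu(1-\eta_0(\epsilon)^2)\to T\mu>0$. This contradiction for $\epsilon$ below an explicit threshold $\epsilon_0=\epsilon_0(T,\mu,k,\om)$ establishes the required lower bound on $\id|Z_1|^2\,ds$, and the Theorem follows.

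The step I expect to be the main obstacle is precisely the term $\id\langle g,\dot Z_1\rangle\,ds$ — and, tied to it, upgrading the $L^2$ smallness of $Z_1$ into pointwise smallness at the two endpoints so that the integration-by-parts boundary term can be absorbed. Once that is in place, the rest is routine bookkeeping with Cauchy--Schwarz and the bounds $|\a|=1$, $|\o|\le\om$.
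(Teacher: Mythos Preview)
Your argument is correct, and it follows a genuinely different route from the paper's proof. The paper interprets $|Z(t+T)|^2 = |Z(t)|^2 - Z(t)^T W(t,t+T) Z(t)$, where $W$ is the observability Gramian of the pair $(kA(\cdot),C)$ with $C=(\sqrt{2k}\ 0)$, and then invokes two off-the-shelf results: the output-injection UCO equivalence of Ioannou--Sun (replacing $kA$ by the simpler upper-triangular $M(t)=kA(t)+K(t)C$) and a Gramian lower bound from Khalil (Lemma~13.4), for which the PE bound on $[a_\times]$ and the boundedness of $\dot a$ supply the hypotheses. Your approach is more elementary and self-contained: you work directly with the energy identity $|Z(t+T)|^2=|Z(t)|^2-2k\int|Z_1|^2$ and extract the lower bound on $\int|Z_1|^2$ by a quantitative contradiction argument, freezing $Z_2$ at $\bar Z_2$, using PE on the fixed vector $\bar Z_2$, and disposing of the term $\langle a\times\bar Z_2,\dot Z_1\rangle$ by an integration by parts whose boundary contribution is controlled via the $L^2$-to-pointwise estimate on $Z_1$. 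Both proofs use $\om$ at exactly the same place --- to bound $\dot a$ --- and both yield a contraction constant depending only on $T,\mu,k,\om$. The paper's proof is shorter because it outsources the technical work to cited lemmas; yours avoids external references at the cost of the endpoint analysis you flagged, and it makes the mechanism of the PE condition more transparent.
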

\begin{proof}
Along the trajectories of~\eqref{systeme Z non perturbe} we have
\begin{displaymath}
\frac{d}{dt} |Z|^2 = -2k|Z_1|^2 \leq 0
\end{displaymath}
which proves the result for $N = 0$. For all $t$
\begin{displaymath}
|Z(t+T)|^2 = |Z(t)|^2 - Z(t)^2 W(t,t+T) Z(t)
\end{displaymath}
where
\begin{displaymath}
W(t,t+T) \triangleq \id \phi(\tau,t)^T C^T C \phi(\tau,t) d\tau
\end{displaymath}
is the observability Gramian of the pair $(kA(\cdot),C)$ and $\phi$ is the transition matrix associated with~\eqref{systeme Z non perturbe}. Computing $W$ is no easy task. However, the output injection UCO equivalence result presented in \cite{ioannou1995} allows us to consider a much simpler system. Note
\begin{displaymath}
K(t) \triangleq \frac{\sqrt{k}}{\sqrt{2}} \left(\begin{array}{c}
I \\
-\cro{a(t)}
\end{array}\right)
\end{displaymath}
and
\begin{align*}
M(t) & \triangleq kA(t)+K(t) C \\
& = \left(\begin{array}{cc}
0 & k\cro{a(t)} \\
0 & 0
\end{array}\right)
\end{align*}
The observability Gramian $\widetilde W$ of the pair ${(M(\cdot),C)}$ is easily computed as
\begin{displaymath}
\widetilde W(t,t+T) = 2k \id \left(\begin{array}{cc}
I & \mathcal A(\tau,t) \\
\mathcal A(\tau,t)^T & \mathcal A(\tau,t)^T \mathcal A(\tau,t)
\end{array}\right)
d\tau
\end{displaymath}
where
\begin{displaymath}
\mathcal A(\tau,t) \triangleq k \int_t^{\tau} \cro{a(u)} du
\end{displaymath}
Such a Gramian is well known in optimal control and has been extensively studied e.g. in \cite{khalil1996}, Lemma~13.4. We have
\begin{itemize}
\item $\id k\cro{a(\tau)}^T k\cro{a(\tau)} d\tau \geq T k^2 \mu I, \quad \forall t$
\item $k\cro{a(\cdot)}$ is bounded by $k$
\item $\frac{d}{dt}k\cro{a(\cdot)}$ is bounded by $k\om$
\end{itemize}
from which we deduce that there exists ${0 < \beta_1 < 1}$ depending on $T,\mu,k,\om$ such that
\begin{displaymath}
\widetilde W(t,t+T) \geq \beta_1 I, \quad \forall t
\end{displaymath}
There also exists $\beta_2 > 0$ depending on $k,T$ such that ${\widetilde W(t,t+T) \leq \beta_2 I}$. From \cite{ioannou1995}, Lemma~4.8.1 (output injection UCO equivalence), $W(t,t+T)$ is also lower-bounded. More precisely, we have
\begin{displaymath}
W(t,t+T) \geq \frac{\beta_1}{2(1+\beta_2 T k)}I \triangleq (1-c) I
\end{displaymath}
with $0 < c < 1$. Assume the result is true for an integer ${N \geq 0}$. For any ${t \in [t_0+NT, t_0+(N+1)T]}$ we have
\begin{align*}
|Z(t+T)|^2 & = |Z(t)|^2 - Z(t)^TW(t,t+T) Z(t) \\
& \leq c |Z(t)|^2 \leq c^{N+1} |Z(t_0)|^2
\end{align*}
which concludes the proof by induction.
\end{proof}
\subsection{Convergence of the observer}
\label{sec : convergence}
Consider the quantity
\begin{equation}
\label{def : d*}
d^* \triangleq \frac{1-c}{2\sqrt 2 T \om}
\end{equation}
where $c$ is defined in Theorem~\ref{thm Z' = kAZ}. The following Theorem, which is the main result of the paper, shows that if ${d < d^*}$, the observer~\eqref{def : observer} gives a solution to Problem~\ref{Problem}.
\begin{theorem}[main result]
\label{thm convergence obs}
We suppose that Assumptions~\ref{hypothese O borné}-\ref{hyp:PE} are satisfied and that
\begin{displaymath}
d < d^*
\end{displaymath}
where $d^*$ is defined in~\eqref{def : d*}. The observer~\eqref{def : observer} defines an error dynamics~\eqref{eq : dot X tilde} for which the equilibrium 0 is locally uniformly exponentially stable. The basin of attraction of this equilibrium contains the ellipsoid
\begin{equation}
\label{bassin d'attraction}
\la \tilde X(0), \quad |\tilde \a (0)|^2 + \frac{|\to(0)|^2}{k^2} < r^2 \ra
\end{equation}
with
\begin{equation}
\label{eq : def r}
r^2 \triangleq  \frac{(1-c)^3}{8\sqrt{3}d^2T^3k^3}\left(1-\frac{2\sqrt 2 dT \om}{1-c}\right)^2
\end{equation}
\end{theorem}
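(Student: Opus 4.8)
The plan is to carry the whole analysis in the $Z$-coordinates of \eqref{def : Z}, in which the error dynamics reads $\dot Z = kA(t)Z + \xi$ with $\xi$ controlled by \eqref{prop : borne xi} and $|Z|^2 = |\at|^2 + |\to|^2/k^2$; in particular the ellipsoid \eqref{bassin d'attraction} is exactly the ball $\{|Z(0)|<r\}$, and since \eqref{def : Z} is a fixed linear scaling, uniform exponential decay of $|Z|$ is equivalent to that of $\tilde X$. The scheme is then the classical one for an exponentially stable LTV system subjected to a perturbation vanishing (here quadratically) with the state: I would view $\xi$ as a perturbation of \eqref{systeme Z non perturbe}, whose transition matrix $\phi$ satisfies, by Theorem~\ref{thm Z' = kAZ} applied at an arbitrary initial time (the constant $c$ being uniform), the two bounds $\|\phi(\tau,s)\|\le 1$ for $\tau\ge s$ (since $|Z|$ is nonincreasing along \eqref{systeme Z non perturbe}) and $\|\phi(s+T,s)\|\le\sqrt c$ for every $s$.

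First I would write, on a window $[s,s+T]$, the variation-of-constants identities $Z(t)=\phi(t,s)Z(s)+\int_s^t\phi(t,u)\xi(u)\,du$ for $t\in[s,s+T]$ and the same at $t=s+T$. Taking norms, using the two bounds on $\phi$ and \eqref{prop : borne xi}, and setting $M:=\sup_{[s,s+T]}|Z|$, one gets an implicit inequality $M\le |Z(s)|+T(\sqrt2\,d\om\,M+dk\,M^2)$ together with $|Z(s+T)|\le \sqrt c\,|Z(s)|+T(\sqrt2\,d\om\,M+dk\,M^2)$. The hypothesis $d<d^*$ is precisely what forces $1-\sqrt2\,dT\om>\tfrac{1+c}{2}>0$, so the term linear in $M$ is a genuine small gain; resolving the quadratic inequality for $M$ by a standard continuity/bootstrap argument — legitimate as long as $|Z(s)|$ stays below the maximum of the relevant downward parabola, which is where the value \eqref{eq : def r} of $r$ originates (note the factor $1-\tfrac{2\sqrt2\,dT\om}{1-c}$ there is positive exactly when $d<d^*$) — bounds $M$ by a fixed multiple of $|Z(s)|$. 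Substituting back then yields a one-window contraction $|Z(s+T)|\le q\,|Z(s)|$ with $q<1$, valid whenever $|Z(s)|<r$.

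From here the theorem follows by induction on $N$: if $|Z(t_0)|<r$, the sampled sequence $\{|Z(t_0+NT)|\}_N$ stays in the ball and decays like $q^N|Z(t_0)|$, while inside the $N$-th window $|Z(t)|\le M_N$ is itself a fixed multiple of $|Z(t_0+NT)|$; together these give $|Z(t)|\le\kappa\,e^{-\lambda(t-t_0)}|Z(t_0)|$ with $\lambda=T^{-1}\ln(1/q)$ and $\kappa$ independent of $t_0$, i.e. uniform exponential stability, with basin of attraction containing $\{|Z(0)|<r\}$. Undoing \eqref{def : Z} transfers the estimate and the ellipsoid back to $\tilde X$, giving \eqref{bassin d'attraction}.

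The structure above is routine; I expect the real work — and the main obstacle — to be the quantitative bookkeeping that produces the \emph{sharp} radius \eqref{eq : def r} together with an explicit $q<1$. One must solve the quadratic bound on $M$, insert it into the window estimate, and then maximize over admissible $|Z(s)|$ the size of the ball on which the combined factor (contraction $\sqrt c$, linear disturbance gain, and quadratic gain times $|Z(s)|$) stays below $1$, all while keeping the bootstrap consistent. A crude use of $|a+b|\le|a|+|b|$ is likely too lossy, so a more careful splitting of the perturbed trajectory into its homogeneous and forced parts — and patient tracking of $1-c$, $\sqrt2\,dT\om$, $dk$, $T$, and whatever interpolation between $|Z|$ and $|Z|^2$ yields the $\sqrt3$ in \eqref{eq : def r} — is where the care lies; the attendant continuity argument (a trajectory started in the ellipsoid never leaves the region where all the estimates hold) is standard but must be spelled out.
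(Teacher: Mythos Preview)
Your approach is sound in principle but genuinely different from the paper's. The paper does \emph{not} iterate a one-window contraction via variation of constants; instead it builds the converse Lyapunov function
\[
V(t,Z)=Z^{T}\!\left(\int_t^{+\infty}\phi(\tau,t)^{T}\phi(\tau,t)\,d\tau\right)\!Z,
\]
bounds it below by $c_1|Z|^2$ with $c_1=\tfrac{1}{2k\sqrt{3}}$ (from $\|kA(\cdot)\|\le k\sqrt{3}$) and above by $c_2|Z|^2$ with $c_2=\tfrac{T}{1-c}$ (from Theorem~\ref{thm Z' = kAZ}), and then computes $\dot V\le -|Z|^2\big(1-\tfrac{2\sqrt{2}dT\om}{1-c}-\tfrac{2dTk}{1-c}|Z|\big)$. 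The radius $r$ in~\eqref{eq : def r} is exactly $\tfrac{1-c}{2dkT}\big(1-\tfrac{2\sqrt{2}dT\om}{1-c}\big)\sqrt{c_1/c_2}$, and the $\sqrt{3}$ you could not place comes from $c_1$, i.e.\ from the \emph{lower} bound on $V$ via the operator norm of $A$---a quantity that never enters your Duhamel-based scheme.

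Your contraction-on-windows argument would certainly deliver local uniform exponential stability with \emph{some} explicit basin, and the small-gain reading of $d<d^*$ is correct. What it will not do, without importing extra information, is reproduce the precise radius~\eqref{eq : def r}: the constants your method naturally produces are built from $\sqrt{c}$, $1-\sqrt{2}dT\om$, and $dkT$, and there is no mechanism by which the factor $(2k\sqrt{3})^{-1}$ appears. So for the qualitative statement your route is a clean alternative (arguably more elementary, avoiding the integral Lyapunov function), but for the theorem \emph{as stated}---with that specific $r$---the paper's Lyapunov construction is what pins down the constants.
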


\begin{proof}
Consider the candidate Lyapunov function
\begin{displaymath}
V(t,Z) \triangleq Z^T \left( \int_t^{+\infty} \phi(\tau,t)^T \phi(\tau,t) d\tau \right)Z
\end{displaymath}
where $\phi$ is the transition matrix of system~\eqref{systeme Z non perturbe}. Let $(t,Z)$ be fixed. One easily shows that $kA(\cdot)$ is bounded by $k\sqrt 3$. Thus (see for example \cite{khalil2000} Theorem 4.12)
\begin{displaymath}
V(t,Z) \geq \frac{1}{2k\sqrt 3} |Z|^2 \triangleq c_1 |Z|^2 \triangleq W_1(Z)
\end{displaymath}
Moreover, Theorem~\ref{thm Z' = kAZ} implies that
\begin{align*}
V(t,Z) & = \sum_{N=0}^{+\infty} \int_{t+NT}^{t+(N+1)T} Z^T \phi(\tau,t)^T \phi(\tau,t) Z \\
	   & \leq T \sum_{N = 0}^{+\infty} c^N |Z|^2 = \frac{T}{1-c} |Z|^2 \\
       & \triangleq c_2 |Z|^2 \triangleq W_2(Z)
\end{align*}
By construction, $V$ satisfies
\begin{displaymath}
\frac{\partial V}{\partial t}(t,Z) + \frac{\partial }{\partial Z}V(t,Z) kA(t) Z = -|Z|^2
\end{displaymath}
Hence, the derivative of $V$ along the trajectories of \eqref{systeme Z perturbe} is
\begin{align*}
\frac{d}{dt} V(t,Z) & = -|Z|^2 + \frac{\partial V}{\partial Z}(t,Z) \ \xi
\end{align*}
Using
\begin{align*}
\left|\frac{\partial}{\partial Z} V(t,Z)\right| & = 2 \left| \int_t^{+\infty} \phi(\tau,t)^T \phi(\tau,t) d\tau Z \right| \leq \frac{2T}{1-c} |Z|
\end{align*}
together with inequality \eqref{prop : borne xi} yields
\begin{displaymath}
\left|\frac{\partial V}{\partial Z}(t,Z) \ \xi \right| \leq \frac{2dT}{1-c} \left(\sqrt 2 \o_{\max} |Z|^2 + k |Z|^3\right)
\end{displaymath}
Hence
\begin{align*}
\frac{d}{dt} V(t,Z) & \leq - |Z|^2\left(1 - \frac{2\sqrt 2 dT \om}{1-c} - \frac{2dTk}{1-c}|Z|\right) \\
& \triangleq -W_3(Z)
\end{align*}
By assumption $d < d^*$, which implies
\begin{align*}
1-\frac{2\sqrt 2dT \om}{1-c} > 0
\end{align*}
We proceed as in \cite{khalil2000} Theorem~4.9. If the initial condition of \eqref{systeme Z perturbe} satisfies
\begin{align*}
|Z(t_0)| & < r  \\
\Leftrightarrow |Z(t_0)| & < \frac{1-c}{2dkT} \left(1-\frac{2 \sqrt 2 dT\om}{1-c}\right) \times \sqrt{\frac{c_1}{c_2}}
\end{align*}
then $W_3(Z(t_0)) >0$ and,  while $W_3(Z(t)) >0$, $Z(\cdot)$ remains bounded by
\begin{align*}
|Z(t)|^2 & \leq \frac{V(t)}{c_1} \leq \frac{V(t_0)}{c_1} \leq \frac{c_2}{c_1} |Z(t_0)|^2
\end{align*}
which shows that
\begin{align*}
W_3(Z) \geq \left(1 - \frac{2 \sqrt 2 dT \om}{1-c} - \frac{2dkT}{1-c} \sqrt{\frac{c_2}{c_1}}|Z(t_0)|\right) |Z|^2
\end{align*}
From \cite{khalil2000}, Theorem~4.10, \eqref{systeme Z perturbe} is locally uniformly exponentially stable. From \eqref{def : Z}, one directly deduces that the basin of attraction contains the ellipsoid \eqref{bassin d'attraction}.
\end{proof}
\begin{remark}
The limitations imposed on $\tilde \a(0)$ in \eqref{bassin d'attraction} are not truly restrictive because, as the actual value $\a(0)$ is assumed known, the observer may be initialized with ${\at(0) = 0}$. What matters is that the error on the unknown quantity $\o(0)$ can be large in practice.
\end{remark}

\section{PE assumption in free-rotation}
\label{sec:PE}
The PE Assumption~\ref{hyp:PE} is the cornerstone of the proof of the main result. It is interesting to investigate whether it is often satisfied in practice (we have seen in Remark~\ref{rk:PE} that it might fail). In this section we consider a free-rotation dynamics, namely $\tau = 0$. We will prove that Assumption~\ref{hyp:PE}, or equivalently condition~\eqref{eq:pe equivalent}, is satisfied for almost all initial conditions.

The following important properties hold.
\begin{itemize}
\item ${\omega^T J \omega}$ is constant over time (which implies that Assumption~\ref{hypothese O borné} is satisfied)
\item The \emph{moment of inertia} of the rigid body expressed in the inertial frame
\begin{equation}
\label{def : M}
\m \triangleq R(t)J\omega(t)
\end{equation}
is constant over time.
\item Thus, any trajectory ${t \mapsto \omega(t)}$ lies on the intersection of two ellipsoids
\begin{displaymath}
\omega^T J \omega = \omega(t_0)^T J \omega(t_0), \quad \omega^T J^2 \omega = \omega(t_0)^TJ^2 \omega(t_0)
\end{displaymath}
\end{itemize}
The analysis of the intersection of those ellipsoids is quite involved and has been extensively studied in e.g. \cite{landau1982}. It follows that there are four kinds of trajectories for the solutions $\omega$ of \eqref{eq euler}.  We list them below, where $(\omega_1,\omega_2,\omega_3)$ are the coordinates of $\omega$ in the body frame.
\begin{description}
\item [Type 1]~ $\omega$ is constant, which is observed if and only if $\omega(t_0)$ is an eigenvector of $J$.
\item [Type 2]~ ${J_1>J_2>J_3}$ \emph{singular case}: $\omega_1(t)$ and $\omega_3(t)$ vanish, $\omega_2(t)$ tends to a constant when $t$ goes to infinity. This situation is observed only for a zero-measure set of initial condition $\omega(t_0)$.
\item [Type 3]~ ${J_1>J_2>J_3}$ \emph{regular case}: the trajectory is periodic and not contained in a plane. This situation is observed for almost all initial condition $\omega(t_0)$.
\item [Type 4]~ the trajectory is periodic and draws a non-zero diameter circle. This situation is observed if and only if two moments of inertia are equal and $\omega(t_0)$ is not an eigenvector of $J$.
\end{description}
Examples of such trajectories are given in Figures~\ref{fig:types 123}-\ref{fig:type 4} for various initial conditions.

\begin{figure}[!ht]
\includegraphics[width=\columnwidth]{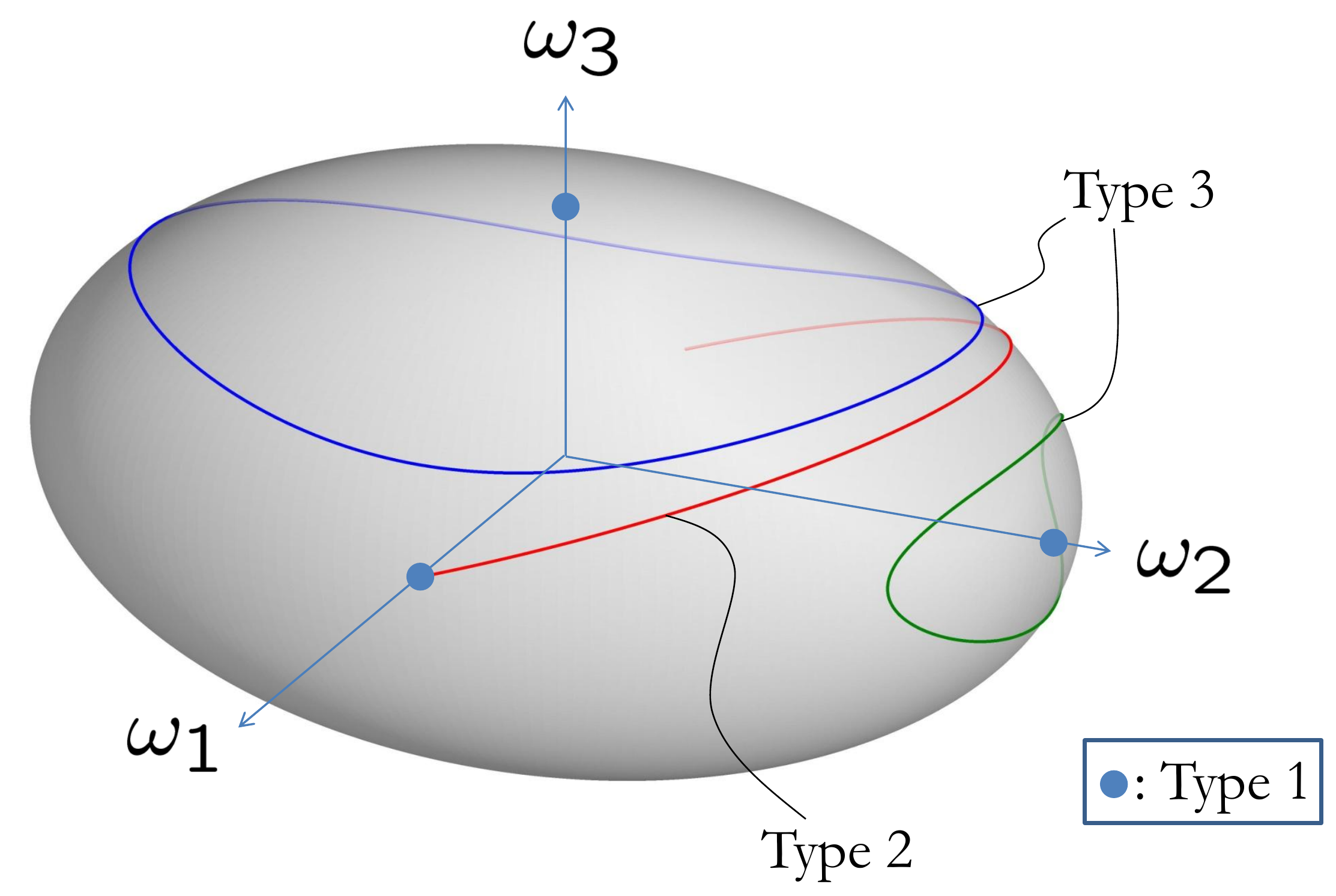}
\caption{Types 1, 2 and 3 trajectories in the case $J_1 > J_2 > J_3$ on an ellipsoid corresponding to a constant $|\m|$}
\label{fig:types 123}
\end{figure}

\subsection{Study of Type 1 and Type 2 solutions}
\label{sec : planar}
The simplest case one can imagine is when $\omega(t_0)$ (or simply $\omega$) is an eigenvector of $J$, namely for $i=1,2$ or $3$
\begin{displaymath}
J\omega = J_i\omega
\end{displaymath}
Note
\begin{displaymath}
R_0 \triangleq R(t_0), \quad w \triangleq |\omega|, \quad u \triangleq \frac{1}{w} R_0\omega
\end{displaymath}
\begin{proposition}
For all $t$, $R(t)$ writes
\begin{align*}
%\label{eq : R planar}
R(t) & = r_{u}(wt) R_0 \nonumber \\
& = \left(\cc wt I + \ss wt  \cro{u} + (1-\cc wt ) u u^T\right)R_0
\end{align*}
where $\c, \, \s$ stand for $\cos, \, \sin$ respectively.
\end{proposition}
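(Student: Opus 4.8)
The plan is to show, in three moves, that the eigenvector hypothesis forces a steady rotation about a fixed inertial axis: I would first argue that $\o$ is constant, then integrate the kinematic equation~\eqref{eq R} with this constant angular velocity, and finally recast the resulting matrix exponential into the stated Rodrigues form by conjugating with $R_0$.

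First I would check that the eigenvector hypothesis forces $\o$ to be constant. Since $J\o(t_0)=J_i\o(t_0)$, the Euler drift vanishes at $t_0$: $E(\o(t_0))=J^{-1}\big(J\o(t_0)\times\o(t_0)\big)=J^{-1}\big(J_i\,\o(t_0)\times\o(t_0)\big)=0$. Because $\tau=0$, the constant map $t\mapsto\o(t_0)$ then solves~\eqref{eq euler}, so by uniqueness of solutions of this smooth (hence locally Lipschitz) ODE one gets $\o(t)\equiv\o(t_0)$, and in particular $|\o(t)|=w$ for all $t$. This is precisely the Type~1 situation already identified above.

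With $\o$ constant, \eqref{eq R} becomes the linear time-invariant equation $\dot R=R\,\cro{\o}$, whose solution with $R(t_0)=R_0$ is $R(t)=R_0\exp\!\big((t-t_0)\cro{\o}\big)$. Writing $\o=w\,n$ with $n\triangleq\o/w$ a unit vector, one has $\cro{\o}=w\,\cro{n}$ and $\cro{n}^3=-\cro{n}$, so the exponential series collapses to Rodrigues' formula $\exp\!\big((t-t_0)\cro{\o}\big)=r_n\!\big(w(t-t_0)\big)$. With the convention $t_0=0$ used in the statement, this reads $R(t)=R_0\,r_n(wt)$.

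It then remains to move $R_0$ to the left. The key identity is the conjugation property of rotations: for every $Q\in SO(3)$ and every $v\in\R{3}$, $Q\,\cro{v}\,Q^T=\cro{Qv}$, which follows from $(Qa)\times(Qb)=Q(a\times b)$, valid since $\det Q=1$. Inserting this into the definition of $r_n$ gives, for every angle $\zeta$, $Q\,r_n(\zeta)\,Q^T=\cos\zeta\,I+\sin\zeta\,Q\,\cro{n}\,Q^T+(1-\cos\zeta)(Qn)(Qn)^T=r_{Qn}(\zeta)$. Taking $Q=R_0$ and $u\triangleq R_0 n=\frac1w R_0\o$ yields $R_0\,r_n(wt)=r_u(wt)\,R_0$, which is the claimed identity; expanding $r_u$ through its definition produces the displayed second line. (An equivalent route is to verify by direct differentiation that $t\mapsto r_u(wt)R_0$ solves~\eqref{eq R} with the correct initial value and to conclude by uniqueness; it relies on the same commutation facts.) The only point needing care is the degenerate case $w=0$, where $u$ is undefined but $r_u(0)=I$ makes the statement trivially true. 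There is no real obstacle here — the single slightly technical point is the bookkeeping of the conjugation identity $Q\cro{v}Q^T=\cro{Qv}$, everything else being routine integration.
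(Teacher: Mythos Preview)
Your proof is correct. The paper takes precisely the alternative route you mention in your closing parenthetical: it directly differentiates $t\mapsto r_u(wt)R_0$, checks that this map satisfies~\eqref{eq R} with value $R_0$ at $t=t_0$, and concludes by Cauchy--Lipschitz uniqueness. Your primary argument is instead constructive: you first integrate the kinematics to $R(t)=R_0\,r_n(wt)$ via the matrix exponential and Rodrigues' formula, and then push $R_0$ to the right using the conjugation identity $Q\,r_n(\zeta)\,Q^T=r_{Qn}(\zeta)$. Both proofs ultimately rest on the same algebraic fact $\cro{R_0\o}R_0=R_0\cro{\o}$; the paper's verification approach is a line shorter, while yours explains where the formula comes from rather than merely confirming it. You also make explicit two points the paper leaves tacit: that the eigenvector hypothesis forces $\o$ constant (via $E(\o(t_0))=0$ and ODE uniqueness), and that the statement implicitly takes $t_0=0$.
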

\begin{proof}
$R(t)$ and $r_u(wt)R_0$ have the same value $R(t_0)$ for $t = t_0$. Moreover,
\begin{align*}
\frac{d}{dt} r_{u}(wt)R_0 & = w \left(-\ss wt I + \cc wt \cro{u} + \ss wt u u^T\right)R_0 \\
& = \left(\cc wt \cro{u} + \ss wt \cro{u}^2 \right) w R_0 \\
& = \left(\cc wt I + \ss wt \cro{u}\right) w \cro{u} R_0 \\
& =  \left(\cc wt I + \ss wt \cro{u} + (1-\cc wt)u u^T\right)w \cro{u} R_0 \\
& = r_{u}(wt) w \cro{u} R_0 \\
& = r_u(wt)\cro{R_0\omega} R_0 = r_u(wt) R_0 \cro{\omega}
\end{align*}
Thus both functions satisfy~\eqref{eq R}, which concludes the proof by Cauchy-Lipschitz uniqueness theorem.
\end{proof}
It follows that for all $t$, $a(t)$ writes,
\begin{align}
\label{eq : a(t) planar}
a(t) & = R(t)^T \ao \\
& =  \cc wt R_0^T\ao - \ss wt R_0^T (u \times \ao) + (1-\cc wt) u^T\ao R_0^Tu \nonumber
\end{align}
For this reason, we call \emph{planar rotation} the $R(\cdot)$ matrix generated by a Type 1 trajectory.
\begin{remark}
\label{rk : m et u}
The direction $u$ of the rotation can be simply computed from $\m$. We have
\begin{displaymath}
\m = RJ\omega =  J_i R \omega = w J_i R R_0^T u = w J_iu
\end{displaymath}
which implies that
\begin{displaymath}
u = \frac{\m}{|\m|}
\end{displaymath}
\end{remark}
The impact of the planar nature of the rotation on the PE assumption is as explained in the next two subsections.
\subsubsection{Type 1 solution with $\m$ aligned with $\ao$}
\label{sec:planar non pe}
Consider that $\ao$ is aligned with $\m = R(t_0)J\omega(t_0)$. In this case $u = \pm \ao$ (see Remark~\ref{rk : m et u}). Thus, \eqref{eq : a(t) planar} yields ${a(t) = R_0^T \ao}$ constant over time. For any $T$ we have, for the unit vector ${x = R_0^T\ao}$
\begin{displaymath}
\frac{1}{T} \int_0^T (a(s)^Tx)^2 ds = 1
\end{displaymath}
Thus, condition~\eqref{eq:pe equivalent} is not satisfied.

\subsubsection{Type 1 solution with $\m$ not aligned with $\ao$}
\label{sec:planar pe}
Conversely, consider that $\ao$ is not aligned with~$\m$. Define $v,z$ such that $(u,v,z)$ is a direct orthonormal basis of $\R{3}$. The decomposition of the unit vector $\ao$ in this basis is given as
\begin{displaymath}
\ao = a_1 u + a_2 v + a_3 z, \quad a_1^2 + a_2^2 + a_3^2=1, \quad \textrm{with} \quad a_1^2 < 1
\end{displaymath}
We have
\begin{align*}
a(t)= & R_0^T(a_1 u + (a_2 \cc wt + a_3 \ss wt)v + (a_3 \cc wt - a_2 \ss wt)z)
\end{align*}
For $T=\frac{2\pi}{w}$, any $t$ and any unit vector $${x = R_0^T(x_1 u + x_2 v + x_3 z)}$$ we have
\begin{align*}
& \frac{1}{T} \ida = \\
& \frac{1}{T} \!\! \id \!\!\!\!\!\!\!\!\!\!\! \left(a_1 x_1 + (a_2 \cc wt + a_3 \ss wt)x_2 + (a_3 \cc wt - a_2 \ss wt)x_3\right)^2 \!ds \\
& = a_1^2 x_1^2 + \frac{a_2^2+a_3^2}{2}(x_2^2+x_3^2) \leq (1-\mu)
\end{align*}
with
\begin{displaymath}
\mu \triangleq \min \left(1-a_1^2,\frac{1+a_1^2}{2}\right) \in (0,1)
\end{displaymath}
Thus, condition~\eqref{eq:pe equivalent} is satisfied.

\subsubsection{Type 2 solutions}
As shown in \cite{landau1982}, the Type~2 solutions are characterized by $J_1>J_2>J_3$ and
\begin{displaymath}
\left|\omega_1(t_0)\right| = \sqrt{\frac{J_3(J_2-J_3)}{J_1(J_1-J_2)}} \left|\omega_3(t_0)\right| \neq 0
\end{displaymath}
which defines a zero-measure set. For this reason, they are called \emph{singular} solutions. In this case, $\omega(t)$ converges to a limit $\omega_{\infty} = (0,\pm w,0)$ when $t$ goes to infinity. The rotation $R(t)$ is thus asymptotically arbitrarily close to a planar rotation around $\m = R(t_0)J\omega(t_0)$. The same arguments as in Sections~\ref{sec:planar non pe},~\ref{sec:planar pe} show that condition~\eqref{eq:pe equivalent} is satisfied unless $R(t_0)J\omega(0)$ and $\ao$ are aligned.

\subsection{Study of Type 3 and Type 4 solutions}
\label{sec:types 3 et 4}
In this section we will show that the Type 3 and Type 4 solutions satisfy the PE assumption. Both proofs relies on the following technical result.
\begin{proposition}[preliminary result]
\label{prop:preliminaire}
If condition~\eqref{eq:pe} is not satisfied, then for all $T>0$ and all $\varepsilon>0$ small enough, there exists $t$ such that for all ${y\in\R{3}}$, and all ${s \in [t,t+T]}$,
\begin{itemize}
\item $R(s)y$ remains between two planes orthogonal to $\ao$ and distant by $\varepsilon|y|$
\item $R(s)^Ty$ remains between two parallel planes distant by $\varepsilon|y|$.
\end{itemize}
\end{proposition}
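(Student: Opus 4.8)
The plan is to convert ``\eqref{eq:pe} is not satisfied'' into a \emph{pointwise} near-constancy statement for $a(\cdot)$ on a well-chosen interval, and then read off the two geometric conclusions by elementary orthogonal-matrix manipulations. First I would record the negation of Assumption~\ref{hyp:PE}: using the equivalence with~\eqref{eq:pe equivalent} and the identity $x^T\cro{a}^T\cro{a}x=1-(x^Ta)^2$, valid for the unit vector $a=a(\tau)$, the failure of~\eqref{eq:pe} means that for every $T>0$ and every $\mu\in(0,1)$ there are an instant $t$ and a unit vector $x$ with
\begin{displaymath}
\frac{1}{T}\id\left(1-(x^Ta(\tau))^2\right)d\tau<\mu .
\end{displaymath}
The integrand $g(\tau)\triangleq 1-(x^Ta(\tau))^2$ is nonnegative, and since $\dot a=a\times\o$ with $|\o|\leq\om$ (Assumption~\ref{hypothese O borné}) it is Lipschitz with constant at most $2\om$.

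The core step --- and, I expect, the main obstacle --- is to promote this small \emph{average} of $g$ on $\itT$ to a small \emph{uniform} bound on $\itT$; this is exactly where the rigid-body dynamics (boundedness of $\o$) is used, and without it the proposition would be false, since a nonnegative function can have tiny average and a large spike. The elementary lemma I would establish is: if $g\geq0$ is $L$-Lipschitz on an interval of length $T$ with average $\leq\mu$, then its maximum $M$, attained at some $s_0$, satisfies $g\geq M/2$ on a subinterval of length at least $\min(M/(2L),T)$, whence $\mu T\geq(M/2)\min(M/(2L),T)$; for $\mu<LT$ this forces $M\leq 2\sqrt{L\mu T}$. Applied with $L=2\om$ it yields: for every small enough $\mu$ there are $t$ and a unit $x$ with $1-(x^Ta(s))^2\leq\eta$ for \emph{all} $s\in\itT$, where $\eta=\eta(\mu,T,\om)\to0$ as $\mu\to0$. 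In particular $x^Ta(\cdot)$ never vanishes on $\itT$, so after possibly replacing $x$ by $-x$ we have $x^Ta(s)=\lambda(s)\in[\sqrt{1-\eta},1]$. This single instant $t$ (and vector $x$) will serve for both conclusions and for all $y$.

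It then remains to read off the two bullets from the decomposition $a(s)=\lambda(s)x+a^\perp(s)$ with $a^\perp(s)\perp x$ and $|a^\perp(s)|\leq\sqrt\eta$. For the first bullet, the $\ao$-component of $R(s)y$ is $(R(s)y)^T\ao=y^Ta(s)$, and for $s_1,s_2\in\itT$
\begin{displaymath}
\left|y^Ta(s_1)-y^Ta(s_2)\right|\leq|\lambda(s_1)-\lambda(s_2)|\,|y|+2\sqrt\eta\,|y|\leq(\eta+2\sqrt\eta)|y| ,
\end{displaymath}
so $R(s)y$ stays between the two planes orthogonal to $\ao$ through the extreme values of $s\mapsto y^Ta(s)$, at distance $\leq(\eta+2\sqrt\eta)|y|$. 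For the second bullet, using $R(s)a(s)=\ao$ (as $R(s)$ is orthogonal and $a(s)=R(s)^T\ao$) one gets $R(s)x=\lambda(s)^{-1}\big(\ao-R(s)a^\perp(s)\big)$, hence the $x$-component of $R(s)^Ty$ is $y^TR(s)x=\lambda(s)^{-1}\big(y^T\ao-y^TR(s)a^\perp(s)\big)$; bounding $|\lambda(s_1)^{-1}-\lambda(s_2)^{-1}|\leq\eta/(1-\eta)$ and $|y^TR(s)a^\perp(s)|\leq\sqrt\eta\,|y|$ shows that the oscillation of $s\mapsto y^TR(s)x$ over $\itT$ is at most $\big(\eta/(1-\eta)+2\sqrt\eta/\sqrt{1-\eta}\big)|y|$, so $R(s)^Ty$ stays between two planes orthogonal to $x$ at that distance. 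Given $\varepsilon>0$ small, it then suffices to pick $\mu$ small enough (in terms of $\varepsilon,T,\om$) that both oscillation bounds are $\leq\varepsilon|y|$, which is possible since $\eta\to0$ as $\mu\to0$; the case $y=0$ is trivial.
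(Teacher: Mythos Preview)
Your argument is correct. The first half---turning the failure of~\eqref{eq:pe} into a uniform bound $1-(x^Ta(s))^2\leq\eta$ on $\itT$ via the Lipschitz constant $2\om$ of $s\mapsto(x^Ta(s))^2$---matches the paper's proof exactly (the paper phrases it as a contradiction argument rather than your cleaner ``max versus average'' lemma, but the content is identical).

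Where you diverge is in the passage from ``$a(s)$ is uniformly close to $x$'' to the two geometric bullets. The paper introduces an auxiliary rotation $R_1$ with $\ao=R_1x$, writes $R(s)=r_{u(s)}(\xi(s))R_1$, and proves a technical Lemma showing $\|R(s)-r_{\ao}(\xi(s))R_1\|$ is small; this requires estimating the action of the difference on three basis vectors $A=x$, $B=\tfrac{u\times x}{|u\times x|}$, $C=A\times B$ and combining the bounds. Only then are the two bullets read off, using that $r_{\ao}(\xi)R_1y$ lies on a circle orthogonal to $\ao$. Your route is considerably more direct: you simply observe that the $\ao$-component of $R(s)y$ is $y^Ta(s)$ and bound its oscillation straight from $a(s)=\lambda(s)x+a^\perp(s)$, and for the second bullet you invert $R(s)a(s)=\ao$ to get $R(s)x$ and bound the oscillation of the $x$-component of $R(s)^Ty$. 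This bypasses the axis--angle parametrization and the basis computations entirely. What the paper's approach buys is a stronger intermediate statement (that $R(s)$ itself is close to a planar rotation about $\ao$), which is conceptually illuminating even if not strictly needed for the proposition as stated; your approach buys brevity and avoids the somewhat delicate case analysis in the paper's Lemma.
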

\begin{proof}
Consider $T > 0$ and  $\mu$ such that
\begin{equation}
\label{eq : mu max}
0< \mu < \min\left(\frac{1}{4T\om}, \frac{T\om}{4} \right) < 1
\end{equation}
Assume that \eqref{eq:pe equivalent} is not satisfied. There exists $t,x$ such that $|x| = 1$ and \begin{equation}
\label{eq : non PE mu}
\frac{1}{T} \ida \geq 1-\mu
\end{equation}
As will appear, one can use the bounded variations of $a(\cdot)$ due to its governing dynamics to establish a lower bound on the integrand. Note
\begin{displaymath}
h(s) \triangleq \left(a(s)^Tx\right)^2, \quad \forall s
\end{displaymath}
We will now show by contradiction that
\begin{displaymath}
h(s) \geq 1- 2 \sqrt{T\om \mu}, \quad \forall s \in \itT
\end{displaymath}
Assume that there exists $s_0$ such that $${h(s_0) < 1-2 \sqrt{T\om \mu}}$$ We have, for all $s$,
\begin{align*}
|\dot h(s)| & = \left|2 \dot a(s)^T x a(s)^T x\right| \\
& = \left|2(a(s)\times \omega)^T x a(s)^T x\right| \leq 2 \om
\end{align*}
Assume $s_0 \leq t+\frac{T}{2}$ and note
\begin{displaymath}
s_1 \triangleq s_0+\sqrt{\frac{T \mu}{\om}} \leq t+T
\end{displaymath}
We have, for any ${s \in [s_0,s_1]\subset [t,t+T]}$
\begin{align*}
h(s) & \leq h(s_0) + 2\om (s-s_0) \\
& < 1-2\sqrt{T\om\mu} + 2\om (s-s_0)
\end{align*}
Hence
\begin{align*}
 \ida & < 1-\frac{1}{T}\sqrt{\frac{T\mu}{\om}} \\
+ \frac{1}{T}\! \int_{s_0}^{s_1} \!\!\! & \left(1-2\sqrt{T\om \mu} + 2\om (s-s_0) \right)\! ds \\
& = 1-2\mu + \mu = 1-\mu
\end{align*}
which contradicts \eqref{eq : non PE mu}. The case ${s_0 > t+\frac{T}{2}}$ is analog with ${s \in [s_0-\sqrt{\frac{T\mu}{\om}},s_0]\subset [t,t+T]}$. Finally, we have, for all $s$
\begin{displaymath}
0 < 1-2\sqrt{T\om \mu} \leq \left(a(s)^Tx\right)^2 \leq 1
\end{displaymath}
which shows that the continuous function ${s\mapsto a(s)^Tx}$ is of constant sign, strictly positive without loss of generality. Thus, we have
\begin{displaymath}
0 < 1-2\sqrt{T\om \mu} \leq a(s)^Tx \leq 1
\end{displaymath}
and in turn
\begin{equation}
\label{def : c}
|a(s)-x|^2 = 2-2a(s)^Tx \leq 4\sqrt{T\om \mu} \triangleq \gamma \sqrt{\mu}
\end{equation}
Note $R_1$ a rotation matrix so that $${\ao = R_1 x}$$ and, for all $s$, ${u(s), \xi(s)}$ such that
\begin{align*}
R(s) & \triangleq r_{u(s)}(\xi(s))R_1 \\% & = \left(\cos \xi(s) I + \sin \xi(s) \cro{u(s)} + \left(1-\cos \xi(s)\right) u(s)u(s)^T\right)R_1
\end{align*}
Note that $R(s)x = r_{u(s)}(\xi(s)) \ao$. The next Lemma formulates that the rotation $R(s)$ is uniformly close to $r_{\ao}(\xi(s))R_1$.
\begin{lemma}
\label{lemme : R presque planaire}
We have, for all $s\in[t,t+T]$ and all $y\in\R{3}$
\begin{equation}
\label{eq:R presque planaire}
\left|R(s) y - r_{\ao}(\xi(s))R_1 y\right|^2 \leq 30 \gamma \sqrt{\mu} |y|^2
\end{equation}
where $\gamma$ is defined by~\eqref{def : c}.
\end{lemma}
\begin{proof}
Let ${s\in[t,t+T]}$. For clarity we may omit the $s$ dependency of $u$ and $\xi$. Note
\begin{align*}
\Delta & \triangleq R(s) - r_{\ao}(\xi) R_1 \\
& = \left(\sin \xi \left(\cro{u} - \cro{\ao}\right) + (1-\cos \xi) \left(u u^T - \ao \ao^T\right)\right) R_1
\end{align*}
If $\ao = u(s)$, ${\|\Delta\| = 0 \leq 30\gamma\sqrt{\mu}}$. Otherwise, for $A=x$ we have, from~\eqref{def : c}
\begin{align*}
\left|\Delta A\right |^2 & = \left|R(s)x-r_{\ao}(\xi)R_1x\right|^2 = \left|R(s)x-\ao\right|^2 \\
& = \left|x-R^T(s)\ao\right|^2 = |x-a(s)|^2 \leq \gamma\sqrt{\mu}
\end{align*}
Note $v,z$ so that ${(u,v,z)}$ is an orthonormal basis of $\R{3}$ write
\begin{displaymath}
\ao = a_1 u + a_2 v + a_3 w, \quad a_1^2+a_2^2+a_3^2 = 1
\end{displaymath}
We have
\begin{align*}
\gamma\sqrt \mu & \geq \left|R(s)x - \ao\right|^2 = \left|(r_u(\xi)-I) \ao\right|\\
& = \left|(a_2(\cc \xi-1) - a_3 \ss \xi) v + (a_2\ss \xi + a_3(\cc \xi-1) )w\right|^2 \\
& = 4 (a_2^2+a_3^2) \sin^2 \frac{\xi}{2}
\end{align*}
Now, for $B = \frac{u \times x}{|u \times x|}$ we have
\begin{align*}
\left|\Delta B\right|^2 & = \frac{\sin ^2 \xi}{x_2^2+x_3^2} \left|u \times (u \times x)-x \times (u \times x)\right|^2 \\
& = \frac{\sin^2 \xi}{x_2^2+x_3^2}(1-x^Tu)^2 |u+x|^2 \\
& = \frac{\sin^2 \xi}{x_2^2+x_3^2}4(1-x_1^2)^2 \\
& \leq 16(x_2^2+x_3^2) \sin^2 \frac{\xi}{2} \leq 4\gamma\sqrt{\mu}
\end{align*}
For ${C = A \times B}$ we have
\begin{align*}
\left|\Delta C\right|^2 & = \left|QR(s)(A \times B) - P(s)(A \times B)\right|^2 \\
& = \left|QR(s)A \times QR(s) B - P(s) A \times P(s) B\right|^2 \\
& = \left|QR(s)A \times \Delta B + \Delta A \times P(s) B\right|^2 \\
& \leq 2(c\sqrt{\mu} + 4c \sqrt{\mu}) = 10 \gamma \sqrt{\mu}
\end{align*}
Finally, for any unit vector ${y = y_1A + y_2 B + y_3 C}$ we have
\begin{align*}
\left|\Delta y\right|^2 & = \left|y_1 \Delta A + y_2 \Delta B + y_3 \Delta C\right|^2 \\
& \leq 3 \left(y_1^2 \left|\Delta A\right|^2 + y_2^2 \left |\Delta B \right|^2 + y_3^2 \left|\Delta C\right|^2\right) \\
& \leq 3 (y_1^2+y_2^2+y_3^2) 10 \gamma\sqrt{\mu} = 30\gamma\sqrt{\mu} |y|^2
\end{align*}
which concludes the proof of Lemma~\ref{lemme : R presque planaire}.
\end{proof}
Note $\varepsilon = 2\sqrt{30\gamma\sqrt \mu}$ and consider any $y$ in $\R{3}$ and any $s$ in ${\itT}$. On the one hand, $r_{\ao}(\xi(s))R_1y$ lies on a circle orthogonal to $\ao$. On the other hand, $$\left|R(s)y - r_{\ao}(\xi(s))R_1y\right| \leq \frac{\varepsilon}{2}|y|$$
This yields the first item of Proposition~\ref{prop:preliminaire} as ${\mu > 0}$ is arbitrary small. Rewriting the result of Lemma~\ref{lemme : R presque planaire} as
\begin{displaymath}
\left|R_1^Tr_{\ao}(-\xi(s))y - R(s)^Ty\right|^2 \leq 30 \gamma\sqrt{\mu}|y|^2
\end{displaymath}
for any $s \in \itT$ and any $y$ yields the second item and concludes the proof.
\end{proof}
\subsubsection{Type 3 solutions}
\label{sec:type 3}
These solutions are characterized by $J_1 > J_2 > J_3$ and
$$|\omega_1(t_0)| \neq \sqrt{\frac{J_3(J_2-J_3)}{J_1(J_1-J_2)}} |\omega_3(t_0)|$$
In this case the trajectory of $\omega(\cdot)$ is closed and thus periodic of a certain period $\tau > 0$, and not contained in a plane. Assume that condition~\eqref{eq:pe equivalent} is not satisfied. We apply the second item of Proposition~\ref{prop:preliminaire} with $T = \tau$. For any $\varepsilon$ small enough, there exists $t$ such that for all $s \in [t,t+\tau]$
\begin{displaymath}
J \omega(s) = R^T(s) \m
\end{displaymath}
remains between two parallel planes and distant by $\varepsilon\left|\m\right|$. As $\omega(\cdot)$ is $\tau-$ periodic, this is true for all ${s \in \mathbf R}$. When $\varepsilon$ goes to 0, we conclude that the trajectory of $\omega(\cdot)$ remains in a plane, which is a contradiction. Thus, condition~\eqref{eq:pe equivalent} is satisfied, unconditionally on $R(t_0)$.

\subsubsection{Type 4 solutions}
\label{sec:type 4}
\begin{figure}[!ht]
\includegraphics[width=\columnwidth]{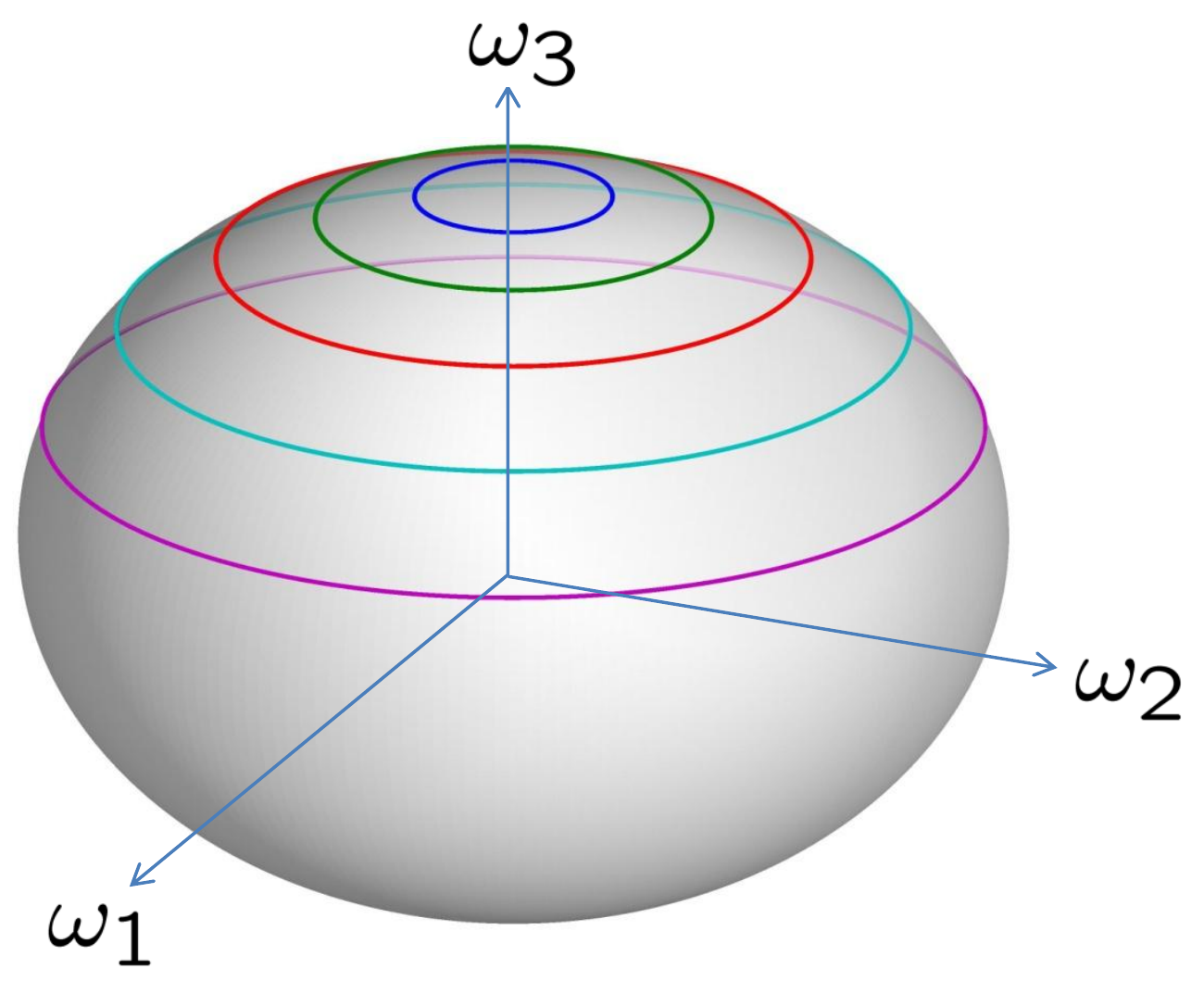}
\caption{Type 4 trajectories in the case $J_1 = J_2 > J_3$ on an ellipsoid corresponding to a constant $|\m|$}
\label{fig:type 4}
\end{figure}
We now consider the case where $\omega(t_0)$ is not an eigenvector of $J$ and two moments of inertia are equal. In this case the trajectory $t \mapsto \omega(t)$ is a circle, as represented in Figure~\ref{fig:type 4}. Since it is contained in a plane, we can not apply directly the same technique as in Section~\ref{sec:type 3}. Without loss of generality, we study the case $J_1 = J_2 > J_3$ (the case ${J_1 > J_2=J_3}$ is analog). We thus consider a trajectory $\omega$ such that $\omega(0)$ satisfies
\begin{displaymath}
\left(\omega_1(t_0),\omega_2(t_0)\right) \neq (0,0), \quad \omega_3(t_0) \neq 0
\end{displaymath}
Following the extensive analysis exposed in \cite{landau1982}, we conveniently chose the inertial frame $(e_1,e_2,e_3)$ so that $e_3$ is aligned with $\m$, namely
\begin{displaymath}
e_3 = \frac{\m}{\left|\m\right|}
\end{displaymath}
For this choice of $e_3$ and in the case where ${J_1=J_2}$, equations \eqref{eq R}-\eqref{eq euler} simplify considerably and one can show that the rotation matrix satisfies for all $t$
\begin{equation}
\label{eq:R deux moments egaux}
R(t) = p \left(\begin{array}{ccc}
(\dots) & (\dots) &  \cc \xi_1 (t-t_1) \\
(\dots) & (\dots) &  \ss \xi_1 (t-t_1)\\
 \cc \xi_2 (t-t_2) &  \ss \xi_2 (t-t_2) & \frac{\sqrt{1-p^2} }{p}
\end{array}
\right)
\end{equation}
where $(\dots)$ designates terms that are irrelevant in the following analysis, $t_1,t_2$ are constant and
\begin{align*}
p & \triangleq \sqrt{\frac{J_1 ^2 \omega_1(t_0)^2 + J_1^2 \omega_2(t_0)^2}{J_1 ^2 \omega_1(t_0)^2 + J_2^2 \omega_2(t_0)^2 +J_3^2 \omega_3(t_0)^2} } \quad \in (0,1) \\
\xi_1 & \triangleq \sqrt{\omega_1(t_0)^2 + \omega_2(t_0)^2 + \frac{J_3^2}{J_1^2} \omega_3(t_0)^2} \quad > 0 \\
\xi_2 & \triangleq \left(\frac{J_3}{J_1}-1\right) \omega_3(t_0) \quad \neq 0
\end{align*}
We now show that condition~\eqref{eq:pe equivalent} is satisfied by contradiction. Assuming that it is not, one can apply the first item of Proposition~\ref{prop:preliminaire} with $$T = \max \left(\frac{2\pi}{\xi_1}, \frac{2\pi}{|\xi_2|}\right)$$ For $\varepsilon$ small enough, there exists $t$ such that for all ${s \in [t,t+T]}$ $R(s)e_3$ remains between two planes orthogonal to $\ao$ and distant by $\varepsilon$. Moreover, expression~\eqref{eq:R deux moments egaux} yields for all $s$
\begin{displaymath}
R(s)e_3 = \left(\begin{array}{c}
p \cos \xi_1(s-t_1) \\
p \sin \xi_1(s-t_1) \\
\sqrt{1-p^2}
\end{array}\right)
\end{displaymath}
Simple geometric considerations show that
\begin{displaymath}
\sqrt{1-(\ao^Te_3)^2} \leq \frac{\varepsilon}{2p}
\end{displaymath}
which yields $\ao = \pm e_3$ when $\varepsilon$ goes to $0$. Hence for $\varepsilon$ small enough, and all $s \in[t,t+T]$
\begin{displaymath}
R(s)e_1 = \left(\begin{array}{c}
(\dots) \\
(\dots) \\
p \cos \xi_2 (s-t_2)
\end{array}
\right)
\end{displaymath}
remains between two planes orthogonal to $\ao = \pm e_3$. Taking $\varepsilon < 2p$ yields a contradiction. The trajectories $R(t)e_1$ and $R(t)e_3$ are represented in Figure~\ref{fig:axes} for better visual understanding of the proof.
\begin{figure}[!ht]
\includegraphics[width = \columnwidth]{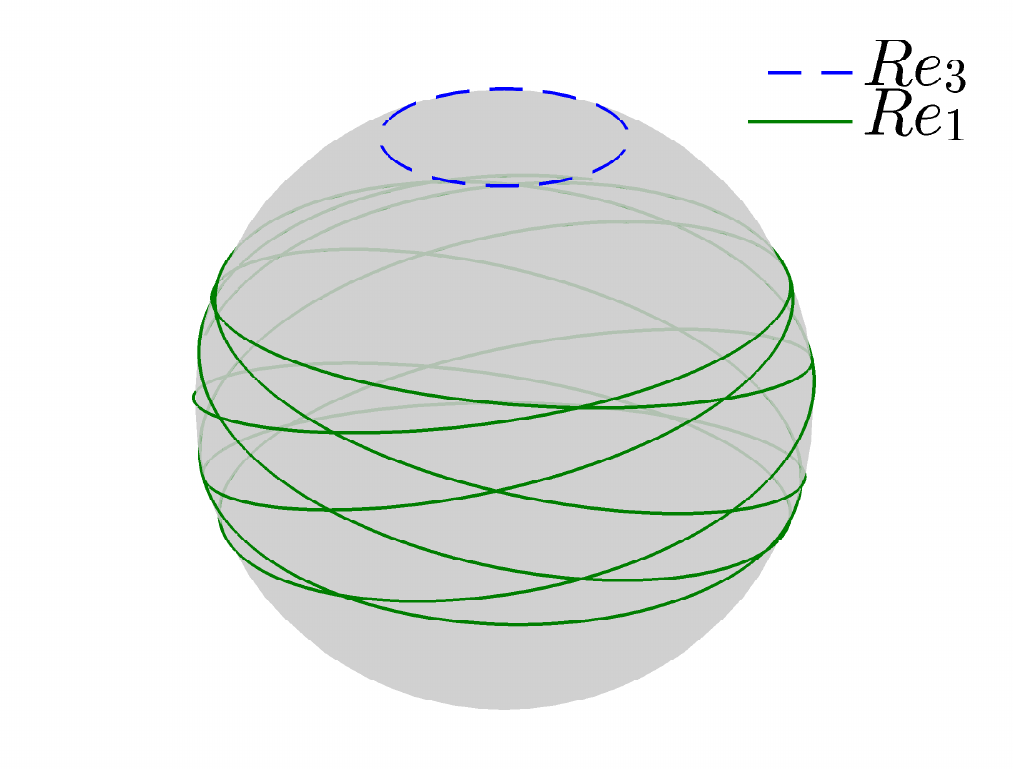}
\caption{$R(t)e_3$ (dashed) and $R(t)e_1$ (solid) evolving on the unit sphere}
\label{fig:axes}
\end{figure}
\subsection{Conclusion}
In this section we have shown the following result.
\begin{theorem}
\label{thm:PE presque toujours vrai}
Consider the vector $$a(t) = R(t)^T \ao $$ where $R(t)$ is a rotation matrix defined as the solution of the free-rotation dynamics \eqref{eq R}-\eqref{eq euler} with $\tau= 0$. Assumption~\ref{hyp:PE} is satisfied for almost all initial conditions ${(R(t_0),\omega(t_0))}$. It fails only in the cases listed below
\begin{enumerate}[(i)]
\item $\omega(t_0)$ is an eigenvector or $J$ and $R(t_0)J \omega(t_0)$ is aligned with $\ao$, or
\item the eigenvalues of $J$ are of the form ${J_1 > J_2 > J_3}$, the coordinates of $\omega(t_0)$ in the trihedron of orthonormal eigendirections of $J$ satisfy
\begin{equation}
\label{eq : condition gamma}
\left|\omega_1(t_0)\right| = \sqrt{\frac{J_3(J_2-J_3)}{J_1(J_1-J_2)}} \left|\omega_3(t_0)\right|
\end{equation} and $R(t_0)J \omega(t_0)$ is aligned with $\ao$.
\end{enumerate}
\end{theorem}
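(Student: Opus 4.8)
The plan is to assemble the per-type analyses carried out in Sections~\ref{sec:planar non pe}--\ref{sec:type 4} according to the four-fold classification of free-rotation trajectories recalled at the start of Section~\ref{sec:PE}, and then to check that the resulting set of failures is Lebesgue-null. I would fix the inertia matrix $J$ once and for all and distinguish three exhaustive, mutually exclusive situations: (a) the principal moments are pairwise distinct, which up to relabelling means $J_1>J_2>J_3$; (b) exactly two of them coincide; (c) $J_1=J_2=J_3$. In each situation every solution of \eqref{eq euler} with $\tau=0$ falls into one of the four types, so it suffices to decide, type by type, for which initial data $(R(t_0),\o(t_0))$ the equivalent condition~\eqref{eq:pe equivalent} holds.

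In case (a) the initial data partition into: $\o(t_0)$ an eigenvector of $J$ (Type~1, a union of three lines); $\o(t_0)$ on the cone~\eqref{eq : condition gamma} but not an eigenvector (Type~2, a codimension-one set); and all remaining $\o(t_0)$ (Type~3). By Section~\ref{sec:type 3}, every Type~3 trajectory satisfies~\eqref{eq:pe equivalent} for \emph{every} $R(t_0)$. By Sections~\ref{sec:planar non pe}--\ref{sec:planar pe} and the paragraph on Type~2 solutions, a Type~1 or Type~2 trajectory satisfies~\eqref{eq:pe equivalent} if and only if $\m=R(t_0)J\o(t_0)$ is not aligned with $\ao$; these are exactly the exceptional cases (i) and (ii) of the statement. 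Since the Type~1 and Type~2 strata already have Lebesgue measure zero in the space of initial data, the union of all failures is null. In case (b), say $J_1=J_2>J_3$, the eigenvectors of $J$ fill a two-plane, so the Type~1 stratum is still null and on it~\eqref{eq:pe equivalent} holds unless $\m\parallel\ao$ (case (i) again), while every other $\o(t_0)$ yields a Type~4 trajectory, for which Section~\ref{sec:type 4} establishes the conclusion with no condition on $R(t_0)$.

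Case (c) is the one that deserves attention: here every $\o(t_0)$ is an eigenvector of $J$, hence every trajectory is Type~1, so the ``bad'' stratum is the whole state space and cannot be dismissed as null on dimensional grounds alone. I would then use $\m=J_1R(t_0)\o(t_0)$, so that $\m\parallel\ao$ is equivalent to $\o(t_0)\parallel R(t_0)^T\ao$; for each fixed $R(t_0)$ this is a one-dimensional condition on $\o(t_0)$, so by Fubini's theorem the set of such pairs is null in the six-dimensional manifold $SO(3)\times\R{3}$, and off it Section~\ref{sec:planar pe} gives the persistent excitation. Collecting the three cases proves that Assumption~\ref{hyp:PE} holds for almost all initial conditions, with the exceptional set being exactly (i) together with (ii).

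The dynamical substance is entirely in the earlier sections; the only genuinely new work is the bookkeeping. The step I expect to be the main obstacle is precisely the totally symmetric case (c): in cases (a) and (b) the exceptional set is automatically null because it is contained in an already null stratum, but in case (c) one must verify directly that the alignment condition ``$\m$ parallel to $\ao$'' really does define a null subset of $SO(3)\times\R{3}$, and one must be careful that this condition is phrased on the trajectory --- i.e.\ on $(R(t_0),\o(t_0))$ --- rather than on $\o(t_0)$ alone, exactly in the spirit of Remark~\ref{rk:PE}.
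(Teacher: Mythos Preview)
Your proposal is correct and follows essentially the same approach as the paper: the theorem is presented there as the ``Conclusion'' of Section~\ref{sec:PE}, and its proof is precisely the assembly of the per-type analyses of Sections~\ref{sec:planar non pe}--\ref{sec:type 4} that you outline. Your explicit treatment of the totally symmetric case $J_1=J_2=J_3$ via Fubini is a welcome refinement, since the paper does not isolate this case separately (it is subsumed under Type~1), but the argument and the structure are the same.
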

It follows that, except for the initial conditions listed in items $(i),(ii)$, the conclusion of Theorem~\ref{thm convergence obs} holds without requiring Assumption 2, which is automatically satisfied. Therefore, in almost all cases, observer~\eqref{def : observer} asymptotically reconstructs the desired angular velocity $\omega$.

\section{Simulation results}
\label{sec : simu}
In this section we illustrate the convergence of the observer and sketch the dependence with respect to the tuning gain $k$.

Simulations were run for a model of a CubeSat \cite{cubesat2014}. The rotating rigid body under consideration is a rectangular
parallelepiped of dimensions about ${20~\textrm{cm} \times 10~\textrm{cm} \times 10~\textrm{cm}}$ and mass $2$kg assumed to be slightly non-homogeneously distributed. The resulting moments of inertia are
\begin{displaymath}
J_1 = 87~\textrm{kg.cm}^2, \quad J_2 = 83~\textrm{kg.cm}^2, \quad J_3 = 37~\textrm{kg.cm}^2
\end{displaymath}
No torque is applied on this system, which is thus in free-rotation. Referring to Section~\ref{sec:PE}, we will consider Type 1 and Type 3 trajectories.

In this simulation the reference unit vector is the normalized magnetic field $\ao$. The satellite is equipped with 3 magnetometers able to measure the normalized magnetic field $y_a$ in a magnetometer frame $\Rm$.

It shall be noted that, in practical applications, the sensor frame $\Rm$ can differ from the body frame $\Rb$ (defined along the principal axes of inertia) through a constant rotation $\Rmb$. With these notations, we have
\begin{displaymath}
a = \Rmb^T y_a
\end{displaymath}
which is a simple change of coordinates of the measurements.

For sake of accuracy in the implementation, reference dynamics and state observer~\eqref{def : observer}
were simulated using Runge-Kutta 4 method with sample period $0.01$s. The generated trajectories correspond to ${\om \simeq 100}$~[rad/s].

\subsection{Noise-free simulations}
To emphasize the role of the tuning gain $k$, we first assume that the sensors are perfect i.e. without noise. Typical measurements for a general Type 3 trajectory are represented in Figure~\ref{fig:mesures}. As $J_1$ and $J_2$ are almost equal, the third coordinate is almost (but not exactly) periodic.
\begin{figure}[!t]
\includegraphics[width = \columnwidth]{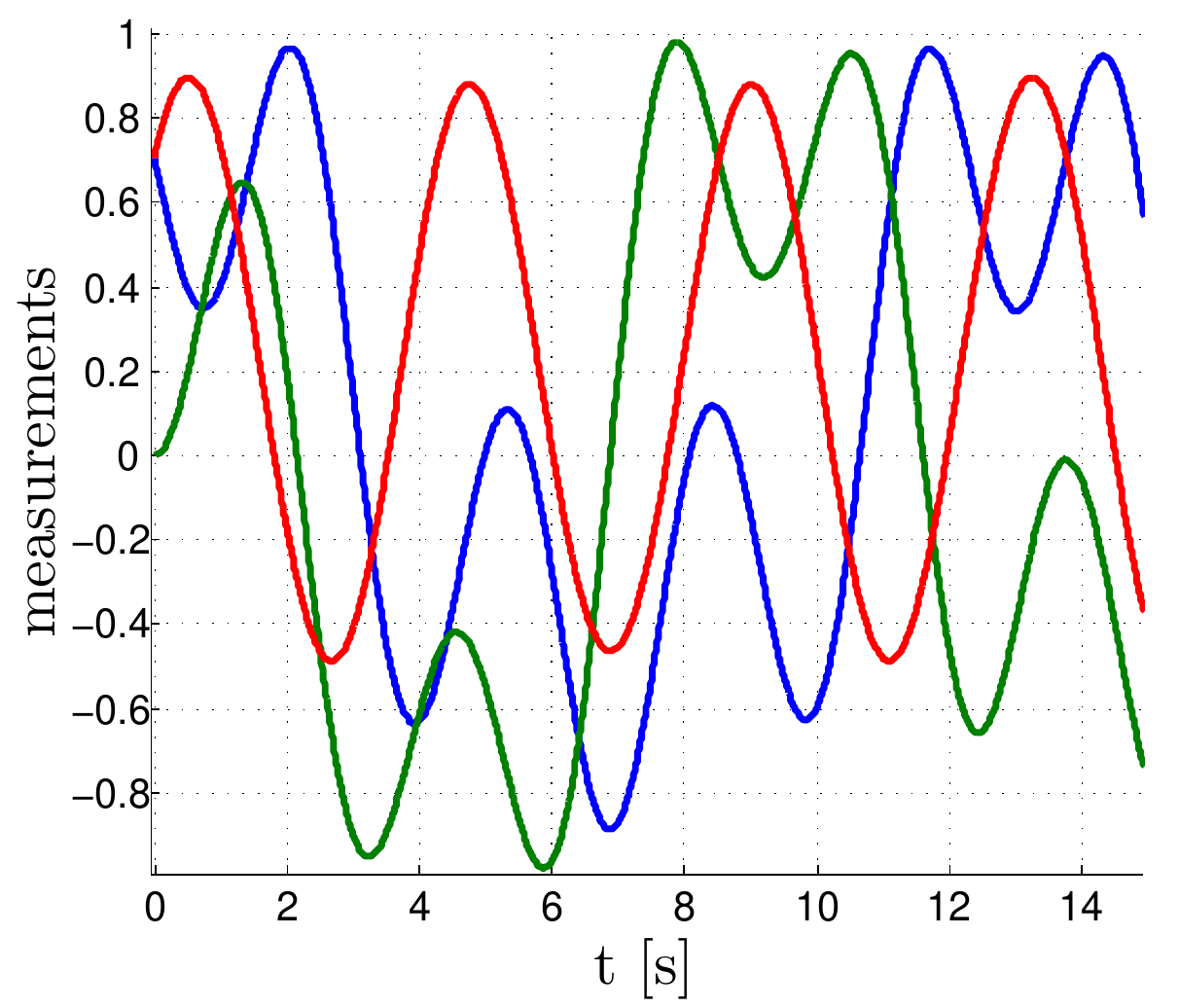}
\caption{Typical measurements in the ideal noise-free case}
\label{fig:mesures}
\end{figure}
Figure~\ref{fig:convergence2} shows the convergence of the observer for various values of $k$.
\begin{figure}[!h]
\includegraphics[width=\columnwidth]{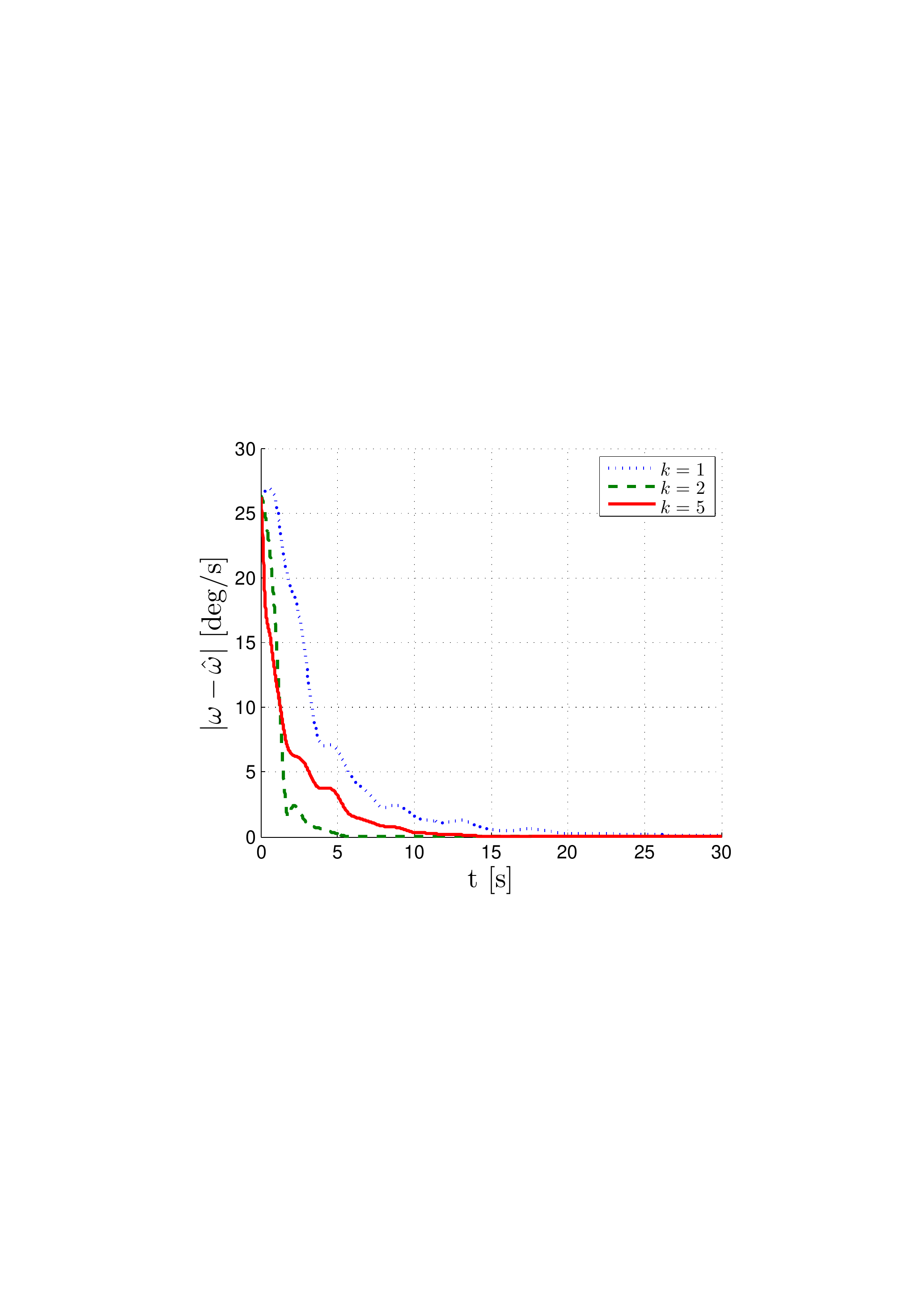}
\caption{Convergence of the observer for increasing values of $k$}
\label{fig:convergence2}
\end{figure}
Interestingly, large values of $k$ produce undesirable effects. This is a structural difference with the two reference vectors based observer previously introduced by the authors~\cite{magnis2014a}. The reason is that the convergence is guaranteed by a PE argument and not by a uniformly negative bound on eigenvalues.

In Figure~\ref{fig:nonPE} we represented the observer error for a case where the PE assumption is not satisfied, namely for a constant $\omega$ with $\m$ and $\ao = (1,0,0)$ aligned. This is a singular case, as discussed earlier. Interestingly, the coordinates $\widetilde \omega_2$ and $\widetilde \omega_3$ converge to zero, while $\widetilde \omega_1$ converges to a constant value. This can easily be proved by using LaSalle invariance principle. Indeed, in this case, $\omega$ is constant and the measurements $a(\cdot)$ satisfy a LTI differential equation.
\begin{figure}[!ht]
\includegraphics[width=\columnwidth]{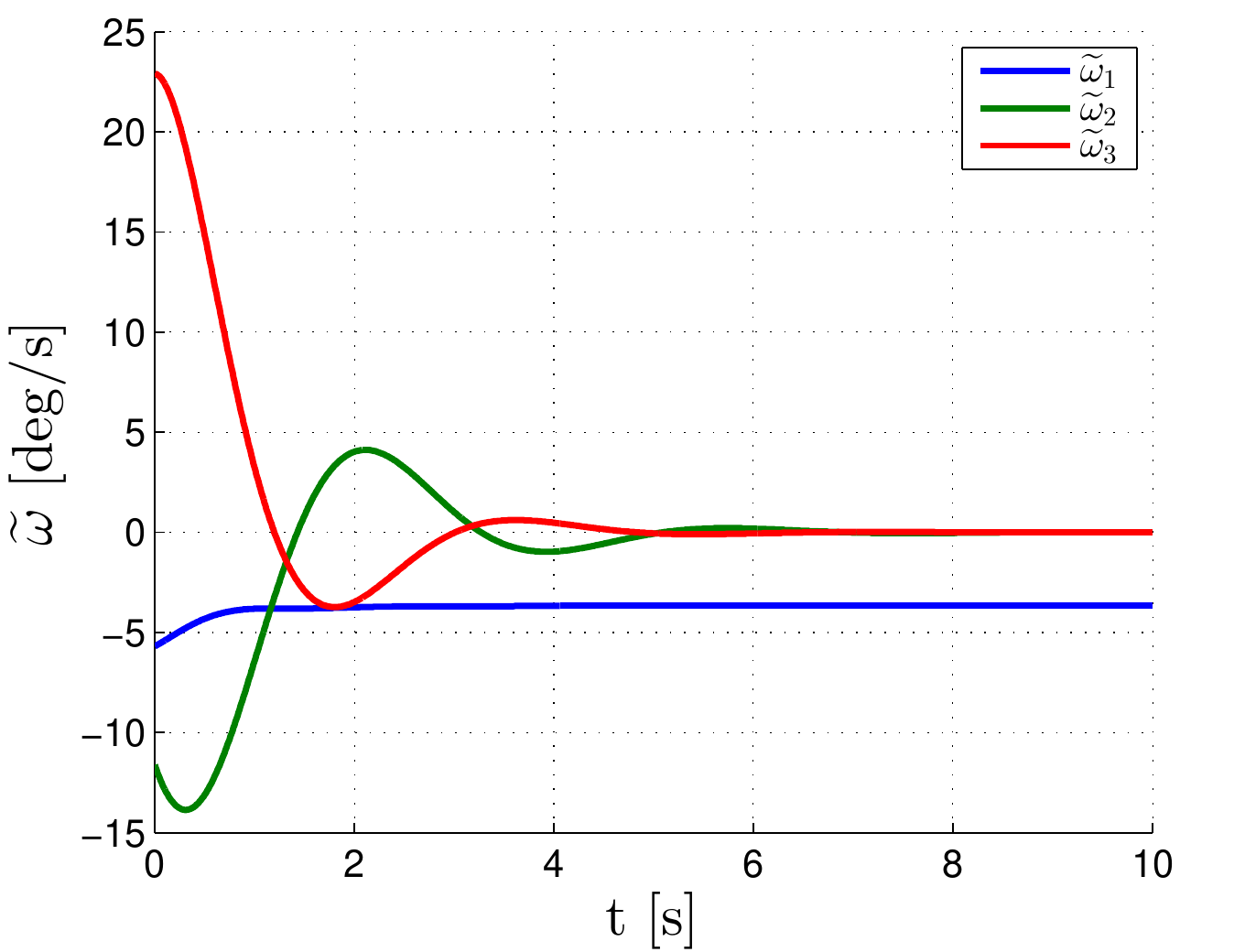}
\caption{Without the PE assumption asymptotic convergence of the observer is lost, a bias remains.}
\label{fig:nonPE}
\end{figure}

\subsection{Measurement noise}
We now study the impact of measurement noise on the observer performance. The simulation parameters remain the same but we add Gaussian measurement noise with standard deviation $\sigma = 0.03~[\textrm{Hz}^{-\frac{1}{2}}]$. Typical measurements are represented in Figure~\ref{fig:mesures bruitees}.
\begin{figure}[!ht]
\includegraphics[width = \columnwidth]{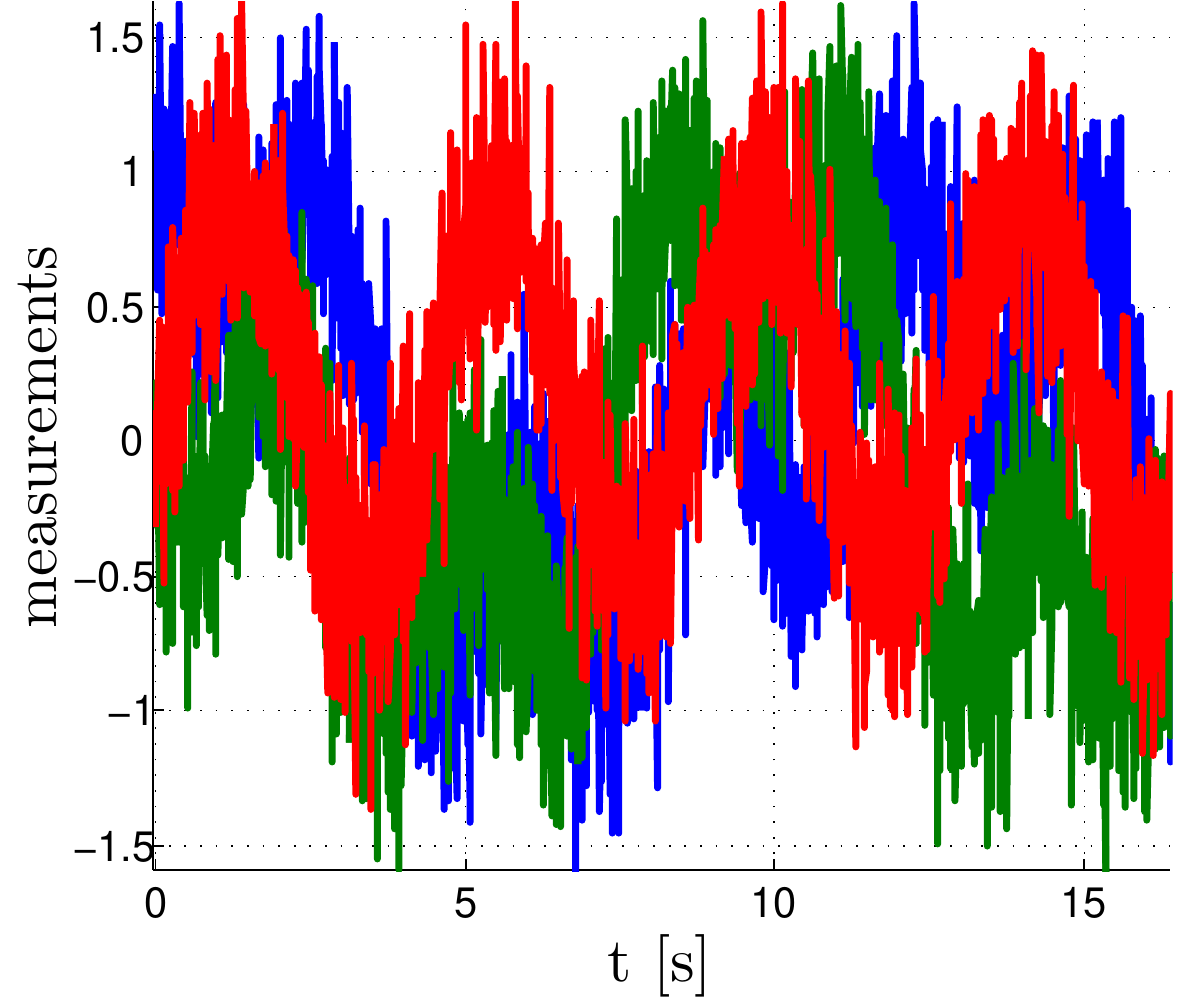}
\caption{Vector measurement with additive noise}
\label{fig:mesures bruitees}
\end{figure}
The observer yields a residual error, about $5 \%$ in Figure~\ref{fig:convergence bruit} for $k=1$. Note that the measurement noise is filtered, thanks to a relatively low value of the gain $k$. For large values of $k$, the observer does not converge anymore (not represented).
\begin{figure}[!ht]
\includegraphics[width = \columnwidth]{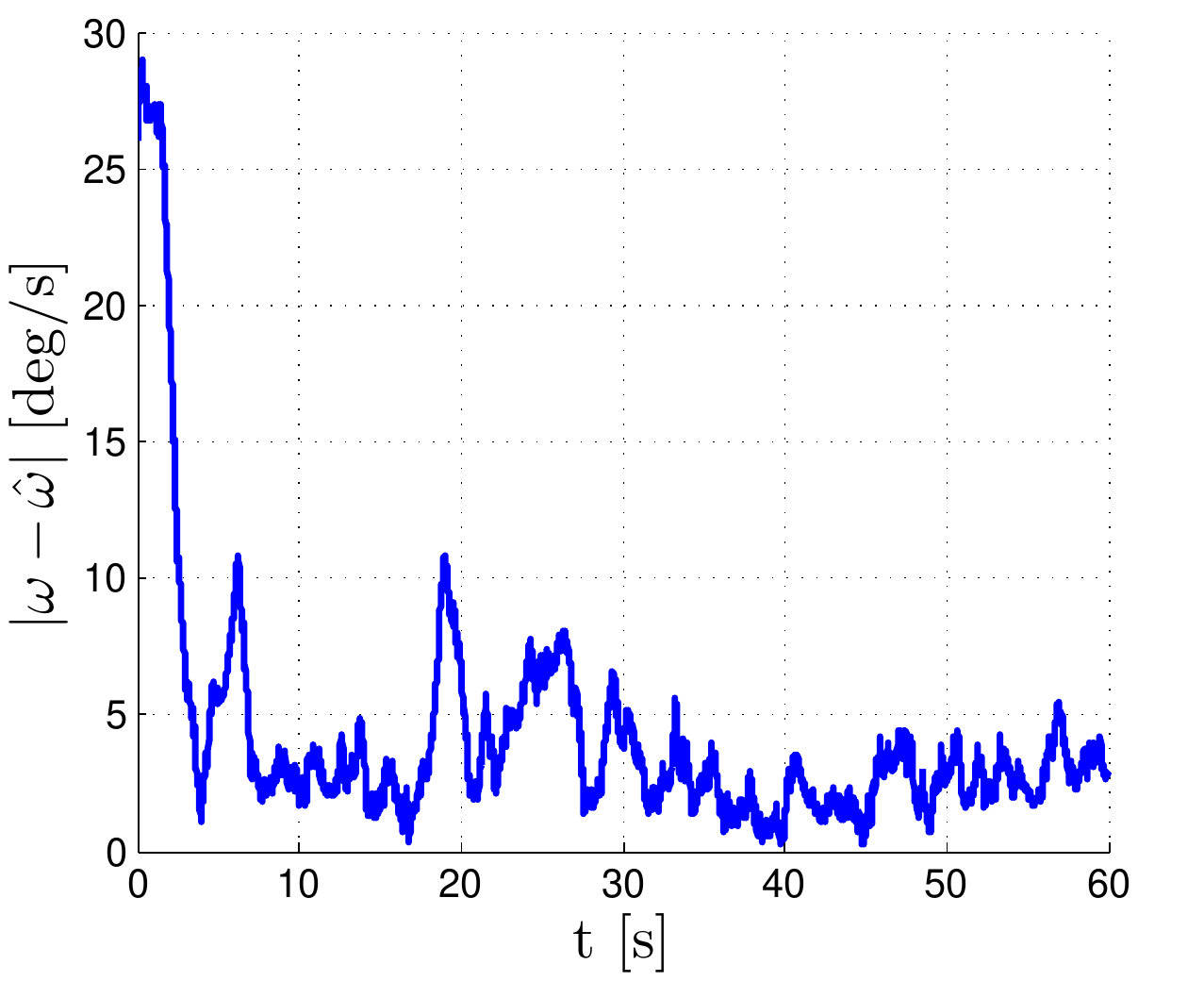}
\caption{Observer performance under noisy measurement for $k=1$}
\label{fig:convergence bruit}
\end{figure}

\section{Conclusions and perspectives}
\label{sec : conclusion}
A new method to estimate the angular velocity of a rigid body has been proposed in this article. The method uses onboard measurements of a single constant vector. The estimation algorithm is a nonlinear observer which is very simple to implement and induces a very limited computational burden. At this stage, an interesting (but still preliminary) conclusion is that, in the cases considered here, rate gyros could be replaced with an estimation software employing cheap, rugged and resilient sensors. In fact, any type of sensors producing a 3-dimensional vector of measurements such as e.g., Sun sensors, magnetometers, could constitute one such alternative. Assessing the feasibility of this approach requires further investigations including experiments.

More generally, this observer should be considered as a first element of a class of estimation methods which can be developed to address several cases of practical interest. In particular, the introduction of noise in the measurement and uncertainty on the input torque (assumed here to be known) will require extensions such as optimal filtering to treat more general cases. White or colored noises will be good candidates to model these elements. Also, slow variations of the reference vector $\ao$ should deserve particular care, because such drifts naturally appear in some cases.

On the other hand, one can also consider that this method can be useful for other estimation tasks. Among the possibilities are the estimation of the inertia $J$ matrix which we believe is possible from the measurements considered here. This could be of interest for the recently considered task of space debris removal~\cite{bonnal2013}.
Finally, recent attitude estimation techniques have favored the use of vector measurements \emph{together} with rate gyros measurements as inputs.
Among these approaches, one can find {\emph{i)} Extended Kalman Filters (EKF)-like algorithms e.g. \cite{choukroun2006,schmidt2008}, {\emph{ii)} nonlinear observers~\cite{mahony2008,martin2010,vasconcelos2008,tayebi2011,grip2011,trumpf2012}.
This contribution suggests that, here also, the rate gyros could be replaced with more in-depth analysis of the vector measurements.

%\bibliographystyle{unsrt}
%\bibliography{../bibfile}

\end{document}